\documentclass[a4paper,11pt,reqno]{amsart}
\usepackage{amsmath}
\usepackage{amsthm,enumerate}
\usepackage{mathtools}
\usepackage[normalem]{ulem}
\usepackage[T1]{fontenc}
\usepackage[utf8]{inputenc}
\usepackage[thinc]{esdiff}
\bibliographystyle{amsalpha} 
\usepackage{amssymb}
\usepackage{color}
\usepackage{dsfont}
\usepackage{fancyhdr}
\usepackage{calc}
\usepackage{url}

\usepackage[text={6.0in,8.6in},centering]{geometry}

\usepackage[citecolor=blue,colorlinks]{hyperref}

\usepackage{xcolor}
\usepackage{xspace}
\bibliographystyle{amsalpha}

\newcommand{\R}{\mathbb{R}}

\newcommand{\supp}{\text{\rm supp}}

\newcommand{\Lip}{\mathrm{Lip}}

\newcommand{\diam}{{\rm{diam\,}}}

\newcommand{\RCD}{\mathsf{RCD}}
\newcommand{\CD}{\mathsf{CD}}

\newcommand{\Ent}{{\rm Ent}}

\DeclareMathOperator\arctanh{arctanh}

\newcommand{\norm}[1]{\left\Vert#1\right\Vert}

\newcommand{\abs}[1]{\left\vert#1\right\vert}
\newcommand{\Real}{\mathbb{R}}

\newcommand{\mm}{\mathfrak m}
\newcommand{\di}{\mathsf{d}}

\newcommand{\Ch}{\mathsf{Ch}}
\newcommand{\Per}{\mathsf{Per}}

\newcommand{\hed}{\mathsf{He}}

\DeclareMathOperator{\hk}{H\HKkern K}
\newcommand{\HKkern}{%
  \mkern-6.0mu
  \mathchoice{}{}{\mkern0.2mu}{\mkern0.5mu}%
}

\theoremstyle{plain}
\newtheorem{lemma}{Lemma}[section]
\newtheorem{theorem}[lemma]{Theorem}

\newtheorem{proposition}[lemma]{Proposition}
\newtheorem{corollary}[lemma]{Corollary}

\newtheorem*{theorem*}{Theorem}
\newtheorem*{maintheorem*}{Main Theorem}

\theoremstyle{definition}

\newtheorem{definition}[lemma]{Definition}
\newtheorem*{definition*}{Definition}
\newtheorem*{remark*}{Remark}
\newtheorem{remark}[lemma]{Remark}

\begin{document}
	\title[Lower bounds on Wasserstein distances]{
		Indeterminacy estimates, eigenfunctions and lower bounds on Wasserstein distances
	}
	\author{Nicol\`o De Ponti}\thanks{Nicol\`o De Ponti: Scuola Internazionale Superiore di Studi Avanzati (SISSA), Trieste,  Italy, \\email: ndeponti@sissa.it} 
	\author{Sara Farinelli}\thanks{Sara Farinelli: Scuola Internazionale Superiore di Studi Avanzati (SISSA), Trieste,  Italy, \\email: sfarinel@sissa.it}

	\begin{abstract}In the paper we prove two inequalities in the setting of $\RCD(K,\infty)$ spaces using similar  techniques. The first one is an indeterminacy estimate involving the $p$-Wasserstein distance between the positive part and the negative part of an $L^{\infty}$ function and the measure of the interface between the positive part and the negative part. The second one is a conjectured lower bound on the $p$-Wasserstein distance between the positive and negative parts of a Laplace eigenfunction. 
	\end{abstract}
	\maketitle
	
	\section{Introduction}
	In recent years, a growing interest has been devoted to the study of Wasserstein distances between positive and negative parts of a function, particularly in relation with indeterminacy estimates \cite{Steiner,SagSte,Steiner2,CaMaOr,CaFa,Mu,DuSa}. Given a closed (i.e. compact, without boundary), smooth, $n$-dimensional Riemannian manifold and denoting by $\mm$ its volume measure, one considers a nice enough function $f$ with zero mean and notices that if it is cheap to transport $f^{+}\mm$ to $f^{-}\mm$, then most of the mass of $f^{+}$ has to be close to most of the mass of $f^{-}$ and hence the zero set has to be large. The uncertainty principle quantify this relation by providing bounds from below on the quantity
	\begin{equation}\label{eq:uncer}
	W_p(f^{+}\mm,f^{-}\mm)\mathcal{H}^{n-1}\left(\{x : f(x)=0\}\right).
	\end{equation}
	Here $W_p$ denotes the $p$-Wasserstein distance, $\mathcal{H}^{n-1}$ is the $(n-1)$-dimensional Hausdorff measure and $ f^{+},\, f^{-}$ are the positive and negative parts of $f$, respectively.
	\medskip
	
	When $f$ is a Laplace eigenfunction, it is an intriguing problem to understand whether a meaningful upper bound on \eqref{eq:uncer} also holds. Questions related to the geometry of eigenfunctions are of central interest for different areas of mathematics and estimates on the quantity \eqref{eq:uncer} togheter with  Steinerberger's conjecture \cite{Steiner} allow to get estimates on the measure of nodal sets, in the flavour of Yau's conjecture \cite{Yau}.
	\medskip
	
	Around 40 years ago, Yau conjectured that there exists a positive constant $C$, depending only on the manifold, such that every eigenfunction $f_{\lambda}$, of eigenvalue $\lambda$, satisfies
	\begin{equation}\label{eq: yau}
	C\sqrt{\lambda}\ge \mathcal{H}^{n-1}\left(\{x: f_{\lambda}(x)=0\}\right)\ge \frac{\sqrt{\lambda}}{C}.
	\end{equation}
	
	We refer to \cite{LugMal} for a review of results related to Yau's conjecture, here we limit to mention that the lower bound in \eqref{eq: yau} was proved by Logunov \cite{Lug2}, while the upper bound has been very recently established for open regular subset of $\Real^n$ by Logunov, Malinnikova, Nadirashvili and Nazarov \cite{LogMalinn}, {but it remains open for compact manifolds (see \cite{Lug1} for a polynomial upper bound)}. 
	\medskip
	
	Regarding Wasserstein distances, Steinerberger proposed the following conjecture: for any $p\ge 1$ there exists a constant $C$, depending only on $p$ and on the manifold, such that for every non-constant eigenfunction $f_{\lambda}$, of eigenvalue $\lambda$, it holds
	\begin{equation}\label{eq: conj Stein}
	\frac{C}{\sqrt{\lambda}}\|f_{\lambda}\|_{L^{1}}^{\frac{1}{p}} \ge W_p( f^+_{\lambda}\mm, f^-_{\lambda}\mm) \ge \frac{1}{C\sqrt{\lambda}}\|f_{\lambda}\|_{L^{1}}^{\frac{1}{p}}\, .
	\end{equation}
	
	Some results are known in the direction of the upper bound. 
	In particular the first inequality in \eqref{eq: conj Stein}  with the non optimal factor $\sqrt{\log{\lambda}/\lambda}$ in place of $1/\sqrt{\lambda}$ was established by Steinerberger already in \cite{Steiner}. For $p=1$, the same non optimal upper bound was then extended to a more general class of spaces, the so called $\mathsf{RCD}(K,N)$ spaces, by Cavalletti and Farinelli \cite{CaFa}. 
	The sharp upper bound is known to hold for closed Riemannian manifolds and $p=1$ thanks to a recent result of Carroll, Massaneda and Ortega-Cerd\'a \cite{CaMaOr}.
	
	Concerning the lower bound no results were known when we firstly elaborated this note (see at the end of subsection \ref{subsec:lowerbound} for more details). 
	\medskip 
	
	It is worth to notice that a lower bound on the Wasserstein distance between $f^{+}_{\lambda}\mm$ and $f^{-}_{\lambda}\mm$ can be used to derive an upper bound on the measure of the nodal set of $f_{\lambda}$, provided an estimate from above of the quantity \eqref{eq:uncer} is established.
	
	\medskip
	The setting of this note will be the one of $\RCD(K,\infty)$ spaces. Roughly speaking, an $\mathsf{RCD}(K,\infty)$ space is a (possibly non-smooth) metric measure space having Ricci curvature bounded below by $K\in \R$ and no upper bound on the dimension, in a synthetic sense. We refer the reader to subsection \ref{subsec:curvaturedimension} for the precise definition, and here we only mention that the class of $\RCD(K,\infty)$ spaces was introduced in \cite{AmbrosioGigliSavare2} and includes: weighted Riemannian manifolds with Bakry-\'Emery Ricci curvature bounded below \cite{Sturm}, pmGH-limits of Riemannian manifolds with Ricci curvature bounded below \cite{GigliMondinoSavare}, finite dimensional Alexandrov spaces \cite{Petr}. In particular, every closed Riemannian manifold endowed with the geodesic distance and the volume measure is an $\RCD(K,\infty)$ space for some $K\in \R$.
	
	\medskip
	
	The present paper has two aims: the first one is to prove an indeterminacy estimate involving the $p$-Wasserstein distance between the positive part and the negative part of a general $L^{\infty}$ function with zero integral. The second one is to show that the lower bound in Steinerberger's conjecture holds in full generality.  Both the results will be established in spaces satisfying the $\RCD(K,\infty)$ condition and with an explicit computation of the constants appearing in the inequalities. The techniques that we use for proving the two inequalities are analogous and we remark that in the statements of our main results, namely Theorem \ref{theorem:main ind} and Theorem \ref{theorem:main eig}, we focus on the case $p=1$ since from this we easily derive the case $p>1$ with an argument explained in the proof of Corollary \ref{cor:main ind_p}. \\
	In the two following subsections we introduce and present respectively the two results. 
\subsection{Indeterminacy estimate}\label{subsec:indeterminacy}
	The topic of indeterminacy estimates involving the Wasserstein distance between the positive and negative parts of an $L^{\infty}$ function with zero integral and the measure of its zero set, of the type 
	\begin{equation*}
	W_1(f^{+}\mm,f^{-}\mm)\mathcal{H}^{n-1}\left(\{x : f(x)=0\}\right)\geq \left(\frac{\|f\|_{L^1}}{\|f\|_{L^{\infty}}}\right)^{\alpha}\|f\|_{L^1{}}\qquad \alpha>0,
	\end{equation*} 
 was firstly introduced by Steinerberger in \cite{Steiner} and \cite{Steiner2} in $1$ and $2$-dimensional spaces and then developed by Sagiv and Steinerberger for Euclidean domains in any dimension $n$ in \cite{SagSte}, with exponent $\alpha=4-\frac{1}{n}$. Then Carrol, Massaneda and Ortega-Cerd\'a \cite{CaMaOr} extended the estimate to general smooth, compact, Riemannian manifolds, also lowering (and thus improving) the exponent $ \alpha $. Finally, Cavalletti and the second author proved in \cite{CaFa} the estimate with the sharp exponent $\alpha=1$ in the even more general setting of metric measure spaces of finite diameter satisfying the so called curvature dimension condition $\CD(K,N)$, $K\in\Real$ and $N<+\infty$. We recall that in this context $K$ plays the role of a lower bound on the Ricci curvature and $N$ plays the role of an upper bound on the dimension. \medskip
	
	The first scope of this note is to prove the sharp (in the exponent) indeterminacy estimate in the possibly infinite dimensional setting of spaces satisfying the $\RCD(K,\infty)$ condition.
	
In order to properly state the result, we introduce the notation $h(X)$ for the Cheeger constant of the metric measure space $(X,\di,\mm)$ (see formula \eqref{eq:defChConst} for the definition of Cheeger constant). Here and below, we also tacitly assume to work with non-zero functions.
	\begin{theorem}\label{theorem:main ind}
		Let $(X,\di,\mm)$ be a space of finite measure satisfying the $\RCD(K,\infty)$ condition for some $K\in \Real$. Let $f\in  L^{\infty}(X,\mm)$ be such that $\int_{X}f\,d\mm=0$ and $\int_{X}\di(\bar{x},x)\abs{f(x)}\,d\mm(x)<+\infty$ for some $\bar{x}\in X$.  Then one has 	
		\begin{equation}\label{indeterminacy}
		W_1(f^+\mm,f^-\mm)\Per(\lbrace f>0 \rbrace)	\geq C(K,h(X)) \left(\frac{\|f\|_{L^{1}}}{\|f\|_{L^{\infty}}}\right)\|f\|_{L^{1}},
		\end{equation}
		with 
		\begin{equation*}
		C(K,h(X)):=\begin{dcases} \frac{\sqrt{\pi}}{27\sqrt{2}}	\qquad& K\geq 0 \, ,\\
		\bigg(1-\frac{1}{(2\pi)^{\frac{1}{4}}}\bigg)\frac{h(X)}{8h(X)+2\abs{K}^{\frac{1}{2}}}\qquad& K<0 \, . \end{dcases}
		\end{equation*}
	\end{theorem}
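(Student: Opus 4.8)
The plan is to reduce the problem to a one-dimensional estimate along the transport geodesics supplied by the $L^1$-optimal transport between $f^+\mm$ and $f^-\mm$, and to exploit the curvature bound through the behavior of conditional measures. First I would invoke the $L^1$-optimal transport (Kantorovich potential) structure for the cost $\di$: associated to a Kantorovich potential $u$ for transporting $f^+\mm$ to $f^-\mm$ there is a partition of (essentially all of) $X$ into transport rays, and by the disintegration theorem the measure $\mm$ restricted to the transport set decomposes as $\mm = \int \mm_\alpha \, d\q(\alpha)$, where each $\mm_\alpha$ is a one-dimensional measure on the ray $X_\alpha$. The key structural input from the $\RCD(K,\infty)$ (or here, essentially, the underlying essentially non-branching $\CD(K,\infty)$-type) theory is that the conditional densities $h_\alpha$ of $\mm_\alpha$ with respect to arclength are logarithmically $K$-concave along each ray; this is the "needle decomposition" / localization philosophy, and it is precisely what lets an $\infty$-dimensional curvature bound say something. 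Writing $g:=f$ restricted to a ray $X_\alpha$, the condition $\int_X f\,d\mm=0$, after a suitable disintegration, forces $\int g \, h_\alpha \, dt = 0$ on a.e. ray (this is the standard balancing property of the $L^1$ transport disintegration).

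Next, on a single ray I would prove the one-dimensional inequality: if $g$ is a bounded function on an interval $I$ with $\int_I g\, h\, dt = 0$ and $h$ is a log-$K$-concave density, then
\begin{equation*}
\Big(\int_I |g|\, h\, dt\Big)^2 \le c(K,h(X)) \cdot \|g\|_{L^\infty} \cdot \Big(\int_I |t| \,|g|\, h\, dt\Big)\cdot \big(\text{boundary/interface term}\big),
\end{equation*}
or rather the form matching \eqref{indeterminacy}. The mechanism is: the Wasserstein cost along the ray is $\int |u'| $ against the transport density, which is bounded below using the fact that mass must cross each level set $\{g=0\}\cap X_\alpha$; the interface $\Per(\{f>0\})$ controls, via a coarea/perimeter estimate on the disintegration, the total "flux" $\int (\text{density at sign changes of } g)\, d\q(\alpha)$; and the log-concavity of $h_\alpha$ gives the quantitative constant — for $K\ge0$ one gets the clean Gaussian-type constant $\sqrt\pi/(27\sqrt2)$ from optimizing an integral inequality for log-concave weights, while for $K<0$ one must pay with a factor involving $|K|^{1/2}$ and the Cheeger constant $h(X)$ (the latter entering to control the diameter/tail of the log-concave density, since an $\RCD(K,\infty)$ space of finite measure need not have finite diameter but does have positive Cheeger constant, giving exponential concentration). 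I would isolate this as the core lemma and do the optimization there.

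Then I would integrate the ray-by-ray estimate against $\q$: Hölder's inequality in $\alpha$, combined with $\|f\|_{L^1} = \int \big(\int |g|\, h_\alpha\, dt\big)\, d\q(\alpha)$, $W_1(f^+\mm,f^-\mm) = \int \big(\int |u'|\, (\text{transport density})\big)\,d\q$ (the $L^1$ transport cost localizes exactly along rays), and the perimeter disintegration bound $\Per(\{f>0\}) \gtrsim \int_{\{\text{$g$ changes sign}\}} h_\alpha \, d\q$, assembles \eqref{indeterminacy} with the stated constants; the passage $p=1 \Rightarrow p>1$ is deferred to Corollary \ref{cor:main ind_p} as the authors indicate. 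The main obstacle I anticipate is twofold: (i) justifying the perimeter/coarea disintegration — that the $(n-1)$-dimensional interface energy $\Per(\{f>0\})$ dominates the integral over rays of the conditional density at the sign-change points, which in the $\RCD$ setting requires the fine structure theory of sets of finite perimeter and the compatibility of the reduced boundary with the transport disintegration — and (ii) handling the $K<0$ case rigorously without a diameter bound, where one must control how far along a ray a log-$K$-concave density can spread, and this is exactly where the Cheeger constant $h(X)$ is forced to appear in $C(K,h(X))$. The $K\ge 0$ case should be comparatively clean since log-concave one-dimensional densities are well understood and the optimization yielding $\sqrt\pi/(27\sqrt2)$ is explicit.
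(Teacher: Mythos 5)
Your proposal follows the localization (needle decomposition) route of Cavalletti--Farinelli \cite{CaFa}, and this is precisely the route that is \emph{not} available here: the disintegration of $\mm$ along the transport rays of a $W_1$-Kantorovich potential into one-dimensional conditional measures with controlled (log-$K$-concave) densities is a theorem for essentially non-branching $\CD(K,N)$ spaces with $N<\infty$, and no analogue is known for $\CD(K,\infty)$ or $\RCD(K,\infty)$ spaces. The paper states this explicitly in subsection \ref{subsec:indeterminacy} as the motivation for changing techniques, and the intended scope includes genuinely infinite-dimensional examples (e.g.\ Gaussian spaces) for which a ray decomposition with log-$K$-concave needles has never been established. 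So the first and main structural input of your argument is a missing theorem, not a routine step, and the subsequent difficulties you flag (compatibility of $\Per(\{f>0\})$ with the ray disintegration, tail control of the needles without a diameter bound) are downstream of a foundation that does not exist in this setting.

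The paper's actual proof avoids any transport-ray structure and runs entirely through the heat semigroup: by the Luise--Savar\'e inequality (Proposition \ref{pr:hellinger-wass-hk}) one has $W_1(f^+\mm,f^-\mm)\ge R_K(t)^{1/2}\,\hed_1\big(H_t(f^+)\mm,H_t(f^-)\mm\big)\ge R_K(t)^{1/2}\,\hed_2^2\big(H_t(f^+)\mm,H_t(f^-)\mm\big)$ for every $t>0$; the squared Hellinger distance equals $\|f\|_{L^1}-2\int_X\sqrt{H_t(f^+)H_t(f^-)}\,d\mm$ by mass preservation, and the cross term is controlled by $J_K(t)^{1/2}\Per(\{f>0\})^{1/2}\|f\|_{L^1}^{1/2}\|f\|_{L^\infty}^{1/2}$ via the Buser-type estimate of Proposition \ref{prop:per} (this is where the sharp gradient bound \eqref{strong Feller}, and hence the $\sqrt{\pi}$, enters). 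Optimizing over $t$ gives the constant $\sqrt{\pi}/(27\sqrt{2})$ for $K\ge 0$; for $K<0$ the Cheeger constant appears not through concentration of needles but through the elementary Lemma \ref{lem: norm-cheeg}, which bounds $\|f\|_{L^\infty}\Per(\{f>0\})/\|f\|_{L^1}$ from below by $h(X)/2$ so that the time-optimization can be closed. If you want to salvage your approach, you would first have to prove a localization theorem for $\RCD(K,\infty)$ spaces, which is a substantially harder open problem than the statement at hand.
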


	The motivation behind our result is the observation that the estimate in \cite{CaFa} does not depend on $N$. This seems to suggest that an analogous result should hold in an “infinite dimensional setting”. The natural extension would have been to $\CD(K,\infty)$ spaces. The main tool used in \cite{CaFa} to prove the result in the $\CD(K,N)$ setting, namely the localization paradigm, however, is not at disposal in the $\CD(K,\infty)$ setting.  
	\\
	Our result  relies on very different techniques which are only available on the subclass of  $\CD(K,\infty)$ spaces which satisfy also the   $\RCD(K,\infty)$ condition.  In particular, the proof makes use of the heat flow and of its properties. The crucial ingredient is an inequality due to Luise and Savar\'e \cite[Theorem 5.2]{LuiseSavare}, linking the Wasserstein distance between two finite measures with the Hellinger distance of their evolution through the heat flow (see Proposition \ref{pr:hellinger-wass-hk} for the precise statement).
	\\
  Inequality  \eqref{indeterminacy} does not imply in general the indeterminacy estimate in \cite{CaFa}, being valid only in $\CD(K,\infty)$ spaces which satisfy the $\RCD(K,\infty)$ condition. However, it is worth to observe that at least in the $\RCD$ setting not only our result is more general than the one in \cite{CaFa}, since it can be applied to “infinite dimensional” spaces like Gaussian spaces, but also we do not require the space to have finite diameter (in opposite to \cite{CaFa}): \eqref{indeterminacy} is meaningful for the class of spaces having positive Cheeger constant, a class which includes spaces having finite diameter (see \cite{DeMoSe} for the details on this implication and for an example of space with finite measure, positive Cheeger constant and infinite diameter). 
  \medskip
  
  We remark that it is out of the purposes of this paper to find the optimal constant in the estimate. In this regard, we mention the recent work \cite{DuSa} where sharp (also in the constant) indeterminacy estimates have been established for spaces with a simple $1$-dimensional geometry.
	\medskip 
	
As we have already anticipated, the inequality \eqref{indeterminacy} can be easily extended to a sharp indeterminacy estimate for the $p$-Wasserstein distance, $p>1$, see Corollary \ref{cor:main ind_p}.\\
The proof of Theorem \ref{theorem:main ind} together with its Corollary is presented in Section \ref{indeterminacysection}, where we also prove a more refined result involving another transport distance, namely the Hellinger-Kantorovich distance \cite{LMS}. To avoid technicalities in the introduction, we refer to subsection \ref{sub:distances measures} for a brief presentation of this distance (see in particular the definition \eqref{eq: dyn HK}), and to Theorem \ref{th: ind hk} for the statement of the result.
	
	\subsection{Lower bound on the Wasserstein distance between eigenfunctions}\label{subsec:lowerbound}
	The second aim of this note is to prove the lower bound in the Steinerberger's conjecture \eqref{eq: conj Stein}. More precisely, we obtain:
	
	\begin{theorem}\label{theorem:manifold}
		Let $(\mathbb{M},g)$ be a smooth, closed, Riemannian manifold, and $p\ge 1$. Then there exists a constant $C(K,M,p)$ such that for any non-constant eigenfunction of the Laplacian $f_\lambda$, of eigenvalue $\lambda\geq M$, the following inequality is satisfied 	
		\begin{equation*}
		W_{p}(f_{\lambda}^{+}\mm,f_{\lambda}^{-}\mm)\geq   C(K,M,p)\frac{1}{\sqrt{\lambda}}\|f_{\lambda}\|_{L^{1}(\mathbb{M})}^{\frac{1}{p}}\, ,
		\end{equation*}
		with $K$ being a lower bound on the Ricci curvature of the manifold. 
	\end{theorem}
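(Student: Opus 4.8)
The plan is to deduce the statement from the metric-measure result Theorem \ref{theorem:main eig} (the case $p=1$ of Steinerberger's lower bound in the $\RCD(K,\infty)$ setting) together with the scaling argument used in Corollary \ref{cor:main ind_p} to pass from $p=1$ to an arbitrary exponent $p\ge1$.

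The first point is to check that a smooth closed Riemannian manifold $(\mathbb M,g)$, equipped with the Riemannian distance $\di_g$ and the volume measure $\mm=\vol_g$, falls within the scope of Theorem \ref{theorem:main eig}: it is an $\RCD(K,\infty)$ space with $K$ any lower bound on $\Ric_g$, it has finite measure, and it has finite diameter, hence $h(\mathbb M)>0$ (by the implication recalled after Theorem \ref{theorem:main ind}). A non-constant classical Laplace eigenfunction $f_\lambda$ of eigenvalue $\lambda>0$ is smooth, so $f_\lambda\in L^\infty\subset L^1$ and $\int_{\mathbb M}\di_g(\bar x,\cdot)\,\abs{f_\lambda}\,d\mm\le\diam(\mathbb M)\,\|f_\lambda\|_{L^1}<+\infty$; its integral vanishes (integrate the eigenvalue equation over the boundaryless $\mathbb M$); and it is, with the same eigenvalue, an eigenfunction of the Laplacian canonically associated with the Cheeger energy, since on a smooth closed manifold the latter is the Laplace--Beltrami operator. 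Hence Theorem \ref{theorem:main eig} applies and yields, for $\lambda$ above the threshold $M$ in the statement, an explicit constant $C_1$ with
\begin{equation*}
W_1(f_\lambda^+\mm,f_\lambda^-\mm)\ \ge\ \frac{C_1}{\sqrt\lambda}\,\|f_\lambda\|_{L^1(\mathbb M)}.
\end{equation*}

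To upgrade this to $p\ge1$ I argue as in Corollary \ref{cor:main ind_p}. Since $\int_{\mathbb M}f_\lambda\,d\mm=0$, the measures $f_\lambda^+\mm$ and $f_\lambda^-\mm$ share the total mass $m:=\tfrac12\|f_\lambda\|_{L^1}$; writing $\mu_\pm:=m^{-1}f_\lambda^\pm\mm\in\Probabilities{\mathbb M}$, and using the scaling property $W_p(c\sigma,c\tau)=c^{1/p}W_p(\sigma,\tau)$ for finite measures of the same mass together with the elementary inequality $W_1(\mu_+,\mu_-)\le W_p(\mu_+,\mu_-)$ between probability measures (Jensen), one obtains
\begin{equation*}
W_p(f_\lambda^+\mm,f_\lambda^-\mm)=m^{1/p}\,W_p(\mu_+,\mu_-)\ \ge\ m^{1/p}\,W_1(\mu_+,\mu_-)=m^{\frac1p-1}\,W_1(f_\lambda^+\mm,f_\lambda^-\mm).
\end{equation*}
Plugging in the $p=1$ bound and $m=\tfrac12\|f_\lambda\|_{L^1}$ gives $W_p(f_\lambda^+\mm,f_\lambda^-\mm)\ge 2^{1-1/p}\,\frac{C_1}{\sqrt\lambda}\,\|f_\lambda\|_{L^1}^{1/p}$, which is the claim with $C(K,M,p):=2^{1-1/p}C_1$.

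The genuine content is in Theorem \ref{theorem:main eig}, whose engine is the heat semigroup $(\mathsf h_t)_{t>0}$. Being an eigenfunction, $f_\lambda$ obeys the exact identity $\mathsf h_t f_\lambda=e^{-\lambda t}f_\lambda$, so the signed measure $\mathsf h_t(f_\lambda^+\mm)-\mathsf h_t(f_\lambda^-\mm)$ still carries the total mass $e^{-\lambda t}\|f_\lambda\|_{L^1}$; in particular its positive and negative parts cannot overlap too much, and a Cauchy--Schwarz argument converts this into a lower bound for the Hellinger distance between $\mathsf h_t(f_\lambda^+\mm)$ and $\mathsf h_t(f_\lambda^-\mm)$ of order $e^{-\lambda t}\|f_\lambda\|_{L^1}^{1/2}$. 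On the other hand the Luise--Savar\'e inequality (Proposition \ref{pr:hellinger-wass-hk}) bounds that same Hellinger distance in terms of $W_1(f_\lambda^+\mm,f_\lambda^-\mm)$, through a coefficient which, as $t\to0^+$, degenerates like a negative power of $t$ up to a curvature factor $e^{cKt}$. Comparing the two estimates and optimizing the free time parameter (the optimal choice being of order $(\lambda+\abs K)^{-1}$) produces the $\lambda^{-1/2}$ decay; the threshold $\lambda\ge M$ is precisely what allows one to absorb the curvature factor (and the gap between $\sqrt{\lambda+\abs K}$ and $\sqrt\lambda$) into the constant when $K<0$. I expect the main difficulty to lie exactly in this quantitative step inside Theorem \ref{theorem:main eig}; the reduction carried out above is routine, the only points deserving a word being the identification of classical eigenfunctions with eigenfunctions of the Cheeger Laplacian and the positivity of $h(\mathbb M)$, both of which are standard on a smooth closed manifold.
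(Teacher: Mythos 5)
Your proposal is correct and follows essentially the same route as the paper: Theorem \ref{theorem:manifold} is obtained by viewing the closed manifold as an $\RCD(K,\infty)$ space, applying Theorem \ref{theorem:main eig} for $p=1$, and upgrading to $p\ge 1$ via the mass-normalization/H\"older comparison $W_p\ge 2^{1-1/p}\|f_\lambda\|_{L^1}^{1/p-1}W_1$, exactly as in Corollaries \ref{cor:main ind_p} and \ref{cor:main eig}. The only (immaterial) discrepancy is in your sketch of the engine of Theorem \ref{theorem:main eig}: the paper needs no Cauchy--Schwarz step there, since with $p=1$ in Proposition \ref{pr:hellinger-wass-hk} the relevant quantity is the total variation $\hed_1(H_t(f_\lambda^+)\mm,H_t(f_\lambda^-)\mm)=\|H_tf_\lambda\|_{L^1}=e^{-\lambda t}\|f_\lambda\|_{L^1}$ exactly.
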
 
	We remark that in the estimate there is no dependence on the dimension of the manifold. 
	From Theorem \ref{theorem:manifold} and the above mentioned upper bound obtained in \cite[Theorem 3]{CaMaOr} it follows exactly the full conjecture \eqref{eq: conj Stein} for $p=1$ and an equivalent formulation of Yau's conjecture:
	\begin{corollary}
		Let $(\mathbb{M},g)$ be a smooth, closed, Riemannian manifold. Then there exists a constant $C$, depending only on the manifold, such that for any  non-constant eigenfunction $f_\lambda$, of eigenvalue $\lambda$, the following inequality is satisfied 	
		\begin{equation*}
		\frac{C}{\sqrt{\lambda}}\|f_{\lambda}\|_{L^{1}(\mathbb{M})}\ge W_{1}(f_{\lambda}^{+}\mm,f_{\lambda}^{-}\mm)\ge \frac{1}{C\sqrt{\lambda}}\|f_{\lambda}\|_{L^{1}(\mathbb{M})}\, .
		\end{equation*}
		As a consequence, Yau's conjecture holds if and only if there exists a constant $C$, depending only on the manifold, such that for any eigenfunction $f_\lambda$ the following inequality is satisfied 	
		\begin{equation*}
		C\|f_{\lambda}\|_{L^{1}(\mathbb{M})}\ge W_{1}(f_{\lambda}^{+}\mm,f_{\lambda}^{-}\mm)\mathcal{H}^{n-1}\left(\{x: f_{\lambda}(x)=0\}\right)\ge \frac{\|f_{\lambda}\|_{L^{1}(\mathbb{M})}}{C}\, .
		\end{equation*}
	\end{corollary}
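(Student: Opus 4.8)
The plan is to split the statement into its two assertions. The first is the two-sided bound
\[
\frac{C}{\sqrt\lambda}\|f_{\lambda}\|_{L^{1}(\mathbb M)}\ \ge\ W_{1}(f_{\lambda}^{+}\mm,f_{\lambda}^{-}\mm)\ \ge\ \frac{1}{C\sqrt\lambda}\|f_{\lambda}\|_{L^{1}(\mathbb M)},
\]
i.e. Steinerberger's conjecture \eqref{eq: conj Stein} for $p=1$. The lower inequality is Theorem \ref{theorem:manifold} with $p=1$, recalling that a smooth closed Riemannian manifold has Ricci curvature bounded below by some $K\in\R$, so that $C(K,M,1)$ depends only on $\mathbb M$; the upper inequality is \cite[Theorem 3]{CaMaOr}. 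The only point to settle is that Theorem \ref{theorem:manifold} is stated for $\lambda\ge M$, so one must treat the finitely many eigenvalues below $M$ (the spectrum being a discrete sequence $0=\lambda_{0}<\lambda_{1}\le\lambda_{2}\le\cdots\to+\infty$ with finite-dimensional eigenspaces). Here I would use that the quotient $W_{1}(f^{+}\mm,f^{-}\mm)/\|f\|_{L^{1}(\mathbb M)}$ is unchanged under $f\mapsto cf$ with $c>0$ (numerator and denominator are both positively $1$-homogeneous in $f$), so on each such eigenspace it suffices to control it on the unit sphere for, say, the $L^{2}$ norm, which is compact. There the quotient is continuous and strictly positive: a non-constant eigenfunction with $\lambda>0$ has zero mean, hence changes sign, so $f^{+}\mm$ and $f^{-}\mm$ are non-zero, mutually singular and of equal mass, which forces $W_{1}(f^{+}\mm,f^{-}\mm)>0$. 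Thus it attains a positive minimum and a finite maximum on that sphere, and enlarging $C$ to absorb these finitely many values yields the two-sided estimate for every non-constant eigenfunction (the step is vacuous if Theorem \ref{theorem:manifold} already applies with $M=\lambda_{1}$).

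For the equivalence with Yau's conjecture, set $N(f_{\lambda}):=\mathcal H^{n-1}(\{x:f_{\lambda}(x)=0\})$, which satisfies $0<N(f_{\lambda})<+\infty$ for every non-constant eigenfunction (the nodal set being a nonempty $(n-1)$-rectifiable set of finite Hausdorff measure). Multiplying the two-sided bound just obtained by $N(f_{\lambda})$ gives
\[
\frac{1}{C}\,\|f_{\lambda}\|_{L^{1}(\mathbb M)}\,\frac{N(f_{\lambda})}{\sqrt\lambda}\ \le\ W_{1}(f_{\lambda}^{+}\mm,f_{\lambda}^{-}\mm)\,N(f_{\lambda})\ \le\ C\,\|f_{\lambda}\|_{L^{1}(\mathbb M)}\,\frac{N(f_{\lambda})}{\sqrt\lambda},
\]
so $W_{1}(f_{\lambda}^{+}\mm,f_{\lambda}^{-}\mm)\,N(f_{\lambda})$ and $\|f_{\lambda}\|_{L^{1}(\mathbb M)}\,N(f_{\lambda})/\sqrt\lambda$ are comparable up to a constant depending only on $\mathbb M$. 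Dividing through by $\|f_{\lambda}\|_{L^{1}(\mathbb M)}>0$, the existence of a manifold-dependent $C$ with $\tfrac1C\|f_{\lambda}\|_{L^{1}(\mathbb M)}\le W_{1}(f_{\lambda}^{+}\mm,f_{\lambda}^{-}\mm)N(f_{\lambda})\le C\|f_{\lambda}\|_{L^{1}(\mathbb M)}$ for all $f_{\lambda}$ is equivalent to the existence of a (possibly different) manifold-dependent $C$ with $\tfrac1C\le N(f_{\lambda})/\sqrt\lambda\le C$ for all $f_{\lambda}$, that is, $\tfrac1C\sqrt\lambda\le\mathcal H^{n-1}(\{f_{\lambda}=0\})\le C\sqrt\lambda$, which is precisely Yau's conjecture \eqref{eq: yau}. (Only one implication is really needed: the lower bound $\mathcal H^{n-1}(\{f_{\lambda}=0\})\gtrsim\sqrt\lambda$ is unconditionally true by Logunov \cite{Lug2}.)

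Thus all the analytic content of the Corollary is carried by its two inputs, Theorem \ref{theorem:manifold} and \cite[Theorem 3]{CaMaOr}; once these are granted, the argument is pure bookkeeping. The only mildly delicate point inside the Corollary's proof is the scaling-plus-compactness reduction used to remove the restriction $\lambda\ge M$ and cover the finitely many small eigenvalues uniformly, together with the care needed to check that all the constants produced depend solely on $\mathbb M$; I do not expect any genuine obstacle beyond what is already resolved by the two cited theorems.
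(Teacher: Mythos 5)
Your argument is correct and is essentially the paper's own: the corollary is stated there as an immediate consequence of Theorem \ref{theorem:manifold} (with $M$ chosen as the first positive eigenvalue $\lambda_1(\mathbb{M})$, which makes your compactness detour for ``small eigenvalues'' vacuous, as you note) combined with the upper bound of \cite[Theorem 3]{CaMaOr}, and the equivalence with Yau's conjecture follows by the same multiplication by $\mathcal{H}^{n-1}(\{f_{\lambda}=0\})$ that you carry out.
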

	\medskip
	
	As already emphasized, we obtain Theorem \ref{theorem:manifold} as an outcome of a more general result valid for a class of spaces which includes closed Riemannian manifolds. The case $p=1$ is stated in the following Theorem, while the case $p>1$ will be derived from the case $p=1$ and it is stated in Corollary \ref{cor:main eig}.
	\begin{theorem}\label{theorem:main eig}
		Let $M>0$, $K\in\Real$ and $(X,\di,\mm)$ be an $\RCD(K,\infty)$ space of finite measure. Then for any non-constant eigenfunction $f_{\lambda}$ of the Laplacian, of eigenvalue  $\lambda\ge M$ and satisfying $\int_{X}\di(\bar x, x)|f_{\lambda}(x)|\,d\mm(x)<+\infty$ for some $\bar x\in X$, it holds
		\begin{equation*}
		W_{1}(f_{\lambda}^{+}\mm,f_{\lambda}^{-}\mm)\geq   C(K,M)\frac{1}{\sqrt{\lambda}}\|f_{\lambda}\|_{L^{1}(X)}\, ,
		\end{equation*}
		where 
		\begin{equation}\label{constant:ck}
		C(K,M):=\begin{cases}\displaystyle e^{-\frac{1}{2}}\qquad&\text{if } K\geq 0\, ,\\ \displaystyle\left(1-\frac{K}{M}\right)^{\frac{M}{2K}-\frac{1}{2}}&\text{if } K<0\, .\end{cases}
		\end{equation}
	\end{theorem}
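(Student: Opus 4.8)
The plan is to prove the lower bound via the Luise–Savaré inequality (Proposition \ref{pr:hellinger-wass-hk}) linking $W_1$ to the Hellinger distance of heat-flow evolutions, exactly as in the proof of the indeterminacy estimate. Write $\mu^+ := f_\lambda^+\mm$ and $\mu^- := f_\lambda^-\mm$; since $f_\lambda$ is an eigenfunction of eigenvalue $\lambda$, it has zero mean, so $\mu^+(X) = \mu^-(X) = \frac12\|f_\lambda\|_{L^1}$. The key point that makes the eigenfunction case work well is that the heat semigroup acts on $f_\lambda$ explicitly: $P_t f_\lambda = e^{-\lambda t} f_\lambda$. This linearity is what replaces the $L^\infty$-bound machinery used in Theorem \ref{theorem:main ind}: we do not need to control the two evolutions $P_t\mu^+$ and $P_t\mu^-$ separately, because their difference is $P_t(f_\lambda\mm) = e^{-\lambda t} f_\lambda\mm$, whose $L^1$-norm is $e^{-\lambda t}\|f_\lambda\|_{L^1}$ and decays in a completely controlled way.

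First I would apply the Luise–Savaré bound to get $W_1(\mu^+,\mu^-) \gtrsim$ some monotone function of $t$ times a quantity comparing $P_t\mu^+$ and $P_t\mu^-$ in Hellinger distance, up to curvature factors $e^{Kt}$ (or the appropriate $K<0$ correction). Next, I would bound the Hellinger-type quantity from below using the total-variation/$L^1$ distance between $P_t\mu^+$ and $P_t\mu^-$: since both have total mass $\frac12\|f_\lambda\|_{L^1}$, and their difference has $L^1$-norm $e^{-\lambda t}\|f_\lambda\|_{L^1}$, one estimates $\|P_t\mu^+ - P_t\mu^-\|_{L^1} = e^{-\lambda t}\|f_\lambda\|_{L^1}$ directly. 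Then the inequality reduces to something of the shape
\begin{equation*}
W_1(\mu^+,\mu^-) \geq g(t)\, e^{-\lambda t}\,\|f_\lambda\|_{L^1}
\end{equation*}
for a suitable explicit $g$ depending on $t$ and $K$, valid for all $t>0$. Finally I would optimize over $t$: the natural choice is $t \sim 1/\lambda$ (so that $e^{-\lambda t}$ is a fixed constant like $e^{-1/2}$), which produces the stated constant $C(K,M)$ — in the $K\geq 0$ case one gets $e^{-1/2}$, and in the $K<0$ case the factor $e^{Kt}$ at $t = \frac{1}{2M}$ (legitimate since $\lambda \geq M$ guarantees $t\leq \frac{1}{2\lambda}$) contributes the $\left(1-\frac{K}{M}\right)^{M/2K - 1/2}$-type expression after the optimization.

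The main obstacle I expect is the passage from the Hellinger (or Hellinger–Kantorovich) distance appearing in Luise–Savaré to a clean lower bound in terms of $\|P_t\mu^+ - P_t\mu^-\|_{L^1}$ and the common mass $\frac12\|f_\lambda\|_{L^1}$. The Hellinger distance between two measures of equal mass is bounded below by a constant times $\|\cdot\|_{TV}^{}/\sqrt{\text{mass}}$ (or a similar elementary inequality relating $L^1$ and $L^2$-type distances between densities), but getting the sharp-looking constants in \eqref{constant:ck} requires tracking these elementary inequalities carefully together with the curvature-dependent contraction estimate for the heat flow; this is precisely where the argument parallels, but is cleaner than, the one in Theorem \ref{theorem:main ind}, since here the $e^{-\lambda t}$ factor is exact rather than estimated. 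The extension to $p>1$ is then routine (Jensen/Hölder on transport plans) and is deferred to Corollary \ref{cor:main eig}.
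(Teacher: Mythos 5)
Your proposal follows essentially the same route as the paper: apply the Luise--Savar\'e inequality \eqref{inequality:hellinger-wass} with $p=1$, use linearity of the heat flow and $H_t f_\lambda = e^{-\lambda t} f_\lambda$ to compute $\hed_1(H_t^{\ast}\mu^+,H_t^{\ast}\mu^-)=\|H_t f_\lambda\|_{L^1}=e^{-\lambda t}\|f_\lambda\|_{L^1}$ exactly, and then optimize in $t$ (the obstacle you anticipate disappears because for $p=1$ the Hellinger distance \emph{is} the total variation, so no mass-normalized comparison is needed; also note the paper's optimal $\bar t$ for $K<0$ is $\frac{1}{2K}\log\frac{\lambda}{\lambda-K}$, with $M$ entering only through a final monotonicity argument, rather than your $t=\frac{1}{2M}$).
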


	\medskip
	
	Notice that in Theorem \ref{theorem:main eig} we are not requiring any compactness of the space $(X,\di)$, nor are we assuming that the spectrum of the metric measure space is discrete. The assumptions $\mm(X)<\infty$ and $\int_{X}\di(\bar x, x)|f_{\lambda}(x)|\,d\mm(x)<+\infty,$ trivially satisfied for compact spaces, are requested here to ensure that the measures $f^{+}_{\lambda}\mm,\,f^{-}_{\lambda}\mm$ have the same total mass and finite $1$-moment. 
	
	As for the indeterminacy estimate, the proof of Theorem \ref{theorem:main eig}, which will be given in Section \ref{sec: eigen}, relies on a crucial inequality that relates the Wasserstein distance between two finite measures with the Hellinger distance of their evolution through the heat flow (see Proposition \ref{pr:hellinger-wass-hk}). 
	\medskip 
	
	To conclude the introduction we notice that, up to our knowledge, the upper bound in \eqref{eq: conj Stein} conjectured by Steinerberger is open for any $p$ in $\RCD(K,\infty)$ spaces and for $p>1$ even in smooth Riemannian manifolds. Concerning the lower bound, only when the first version of this manuscript was in preparation we became aware of the concomitant work \cite{Mu}, where the author obtains the conjectured inequality for closed Riemmanian manifolds and $p=1$, with an implicit constant. It is interesting to notice that the proof in \cite{Mu} is based on ideas from elliptic PDEs and it makes use of a new, non-trivial, mass (non)-concentration property of Laplace eigenfunctions around their nodal set, while our approach does not require to appeal to any fine property satisfied by $f_{\lambda}$ (see Remark \ref{rmk: no properties}).

	\section*{Acknowledgements}
	The authors would like to thank Fabio Cavalletti for helpful discussions and many valuable suggestions.

\section{Preliminaries}\label{sec:prel}
In the sequel we will denote by $(X,\di)$ a complete and separable metric space. By $\mathcal{M}(X)$ we denote the space of  finite, non-negative, Borel measures on $X$.  We write $\mu\in \mathcal{M}_p(X)$  if $\mu\in \mathcal{M}(X)$ and there exists $\bar x\in X$ 
such that
\begin{equation*}
\int_{X}\di(\bar x,x)^p\, d\mu(x)<+\infty,
\end{equation*}
while $\mathcal{P}_p(X)\subset \mathcal{M}_p(X)$ denotes the subset of probability measures with finite $p$-moment. When $X$ is endowed with a Borel measure $\mm$, we denote by $L^p(X,\mm)$ the Lebesgue space of $p$-integrable (equivalence class of) functions, $p\in [1,\infty]$. For simplicity, we often write $L^p(X)$ (or $L^p$) in place of $L^p(X,\mm)$. 

We write $\mathcal{C}_{b}(X)$ to denote the space of real valued, bounded and continuous functions on $X$. The set of real valued (bounded, or with bounded support) Lipschitz functions is denoted by $\mathrm{Lip}(X)$ (respectively $\mathrm{Lip}_b(X)$ or $\mathrm{Lip}_{bs}(X)$). Finally, $\mathrm{B}_b(X)$ is the set of bounded Borel functions on $X$.

\subsection{Wasserstein, Hellinger-Kantorovich and Hellinger distances}\label{sub:distances measures}
\begin{definition}
Given $\mu_{1},\,\mu_{2}\in\mathcal{M}(X)$ and $p\in[1,+\infty)$, the $p$-Wasserstein distance $W_p$ between $\mu_1$ and $\mu_2$ is defined as 
	\begin{equation*}
	{W_p^p(\mu_1,\mu_2)}:=\inf\left \lbrace \int_{X\times X}\di(x,y)^{p}\,d\pi(x,y)\,\mid\pi\in\mathcal{M}(X\times X), \, (P_i)_{\sharp}\pi=\mu_i, i=1, 2\,\right \rbrace,
	\end{equation*}
	where $(P_i)_{\sharp}$ is the pushforward through the projection on the $i$-th component.
\end{definition}

Notice that $W_p(\mu_1,\mu_2)=+\infty$ whenever $\mu_1(X)\neq \mu_2(X)$, but $W_p(\mu_1,\mu_2)$ is finite if $\mu_{1},\,\mu_{2}\in\mathcal{M}_p(X)$ and have the same total mass. In particular, it is well known that $(\mathcal{P}_p(X),W_p)$ is a complete and separable metric space. The $p$-Wasserstein distance metrizes the weak convergence of measures plus convergence of the $p$-moment (see e.g. \cite{Villani}).

When $(X,\mathsf{d})$ is a length metric space, one can prove a dynamic formulation of the Wasserstein distance (see for instance \cite[Prooposition 2.10]{LuiseSavare}):
\begin{equation}\label{eq: dyn Was}
\frac{1}{p}W_p^p(\mu_0,\mu_1)=\sup\Big\{\int_X \zeta_1\,d\mu_1-\int_X \zeta_0\,d\mu_0, \ \zeta\in C^1([0,1],\mathrm{Lip}_b(X)),\, \partial_t\zeta_t+\frac{1}{q}|D\zeta_t|^q\le 0\Big\},
\end{equation}
where we are using the notation $\abs{D f}(x)$ for the slope of a Lipschitz function $f$ at the point $x$, i.e.
$$\abs{D f}(x):=\limsup_{y\to x} \frac{|f(y)-f(x)|}{\mathsf{d}(y,x)}.$$

We also introduce the weighted Hellinger-Kantorovich distance $\hk_{\alpha}$, $\alpha>0$, following the theory developed in \cite{LMS}. In its dynamical formulation on a length metric space $(X,\mathsf{d})$ it reads as follow
\begin{equation}\label{eq: dyn HK}
\hk^2_{\alpha}(\mu_0,\mu_1):=\sup\Big\{\int_X \zeta_1\,d\mu_1-\int_X \zeta_0\,d\mu_0, \ \zeta\in C^1([0,1],\mathrm{Lip}_b(X)),\, \partial_t\zeta_t+\frac{\alpha}{4}|D\zeta_t|^2+\zeta_t^2\le 0\Big\}.
\end{equation}
Notice that $\hk_{\alpha}(\mu_0,\mu_1)$ is finite even if $\mu_0(X)\neq \mu_1(X)$ and one can prove that $\hk_{\alpha}$ is indeed a distance on $\mathcal{M}(X)$. 

\medskip
\begin{definition}
	Given $\mu_{0},\,\mu_{1}\in \mathcal{M}(X)$ and $p\in[1,+\infty)$, the $p$-Hellinger distance $\hed_p$ (also called Matusita distance) \cite{Hellinger,Matusita} between $\mu_{0}$ and $\mu_{1}$ is defined as
	\begin{equation*}
	\hed^p_{p}(\mu_0,\mu_1):=\int_{X}\Big|\rho_{0}^{1/p}-\rho_{1}^{1/p}\Big|^{p}\,d\lambda,
	\end{equation*}
	where $\lambda$ is any dominating measure of $\mu_0,\,\mu_1$ and $\rho_i$ are the relative densities:  $\mu_{i}\ll\lambda$ and $\mu_{i}=\rho_i\lambda$ for $i=0,1$.
\end{definition}

We are particularly interested in the case $p=1$, which corresponds to the classical total variation, and $p=2$, which is the original distance studied by Hellinger. 	

An immediate consequence of the elementary inequality $|t-s| \ge \big|t^{1/2}-s^{1/2}\big|^{2}$ is that 
\begin{equation}\label{ineq: he1-he2}
\hed_1(\mu_0,\mu_1)\ge \hed^2_2(\mu_0,\mu_1) \qquad \textrm{for every} \ \mu_0,\mu_1\in \mathcal{M}(X).
\end{equation}

It is not difficult to show that all the $p$-Hellinger distances induce the same strong convergence of the total variation, and are complete distances on $\mathcal{M}(X)$.

It is useful to recall here that also the $p$-Hellinger distances admit a dynamic formulation for $p>1$ \cite[Proposition 2.8]{LuiseSavare}, specifically:
\begin{equation}\label{eq: dyn he}
\hed_p^p(\mu_0,\mu_1)=\sup\Big\{\int_X \zeta_1\,d\mu_1-\int_X \zeta_0\,d\mu_0, \ \zeta\in C^1([0,1],\mathrm{B}_b(X)),\, \partial_t\zeta_t+(p-1)\zeta_t^{\frac{p}{p-1}}\le 0\Big\}.
\end{equation}

\medskip
For every two measures $\mu_0,\mu_1\in \mathcal{M}(X)$ one can prove (see \cite[Chapter 7]{LMS}) the following relations between $\hed_2$, $W_2$ and $\hk$
\begin{align}
&\hk_{\alpha}(\mu_0,\mu_1)\le \hed_2(\mu_0,\mu_1) \qquad \textrm{and} \qquad \lim_{\alpha\downarrow 0} \hk_{\alpha}(\mu_0,\mu_1)=\hed_2(\mu_0,\mu_1), \\
\label{diseq: hk-W2}&\sqrt{\alpha}\hk_{\alpha}(\mu_0,\mu_1)\le W_2(\mu_0,\mu_1) \qquad \textrm{and} \qquad \lim_{\alpha\uparrow +\infty} \sqrt{\alpha}\hk_{\alpha}(\mu_0,\mu_1)=W_2(\mu_0,\mu_1).
\end{align}

\subsection{Metric measure spaces and curvature condition}\label{subsec:curvaturedimension}
In this section we recall some basic constructions in the theory of metric measure spaces, the definition of $\RCD(K,\infty)$ spaces and some of their properties, which will be useful later on. We refer to the survey \cite{AmbrosioICM} and the book \cite{GigliPasqualetto} as general references on the subject.

Our assumption on the space is that $(X,\di,\mm)$ is a metric measure space, briefly m.m.s., in the sense that $(X,\di)$ is a complete and separable metric space and $\mm$ is a non-negative, Borel measure defined on the Borel $\sigma$-algebra given by the metric $\di$. 

Although not needed for some of the results that we are going to discuss, we always assume in the paper $\mm(X)<\infty$ and $\supp(\mm)=X$. 	

Let $A\subset X$ be a Borel set, the perimeter $\mathrm{Per}(A)$ is defined as
\begin{equation*}
\mathrm{Per}(A):=\inf\bigg\{\liminf_{n\rightarrow \infty}\int_X |D f_n|\,d\mm: f_n\in \mathsf{Lip}_{bs}(X), f_n\rightarrow \chi_A \ \mathrm{in} \ L^1(X,\mm)\bigg\},
\end{equation*}
where we denote by $\chi_A:X\to\{0,1\}$ the indicator function of the set $A$.

The Cheeger constant of the metric measure space $(X,\mathsf{d},\mm)$ is defined as follows: 
\begin{equation}\label{eq:defChConst}
h(X):=\inf  \left\{\frac{\Per(A)}{\mm(A)}\, :\, A\subset X \textrm{ Borel with} \ 0<\mm(A)\leq \mm(X)/2 \right\}  .
\end{equation}

We define the relative entropy functional with respect to $\mm$, $\Ent_{\mm}:\mathcal{P}_2(X)\to [0,+\infty]$, as
\begin{align*}
\Ent_{\mm}(\mu):=\begin{cases}\int_{\lbrace\rho>0\rbrace}\rho\log(\rho)\,d\mm\qquad&\text{if } \mu=\rho\mm,\\+\infty\qquad&\text{otherwise}.\end{cases}
\end{align*}
In order to define the $\RCD(K,\infty)$ condition, we  need first to define the $\CD(K,\infty)$ introduced by Lott-Villani in \cite{LottVillani} and by Sturm in \cite{Sturm}.
\begin{definition}
We say that a m.m.s. $(X,\di,\mm)$ satisfies the $\CD(K,\infty)$ condition if for any couple of measures $\mu^0,\,\mu^1\in\mathcal{P}_2(X)$ with $\Ent_{\mm}(\mu_i)<+\infty$, $i=0,1$, there exists a $W_2$-geodesic $\{\mu_t\}_{t\in[0,1]}$ such that $\mu_0=\mu^0$, $\mu_1=\mu^1$ and for every  $t\,\in(0,1)$
\begin{equation*}
\Ent_{\mm}(\mu_t)\leq (1-t)\Ent_{\mm}(\mu_0)+t\Ent_{\mm}(\mu_1)-\frac{K}{2}t(1-t)W_2^{2}(\mu_0,\mu_1).
\end{equation*}
\end{definition}
For $f\in L^{2}(X,\mm)$ we define the Cheeger energy (see \cite{Cheeger}) as 
\begin{equation*}
\Ch(f): = \inf \left\{\liminf_{n\to \infty}\frac{1}{2}\int \abs{D f_n}^2 \, d\mm \colon 
f_n\in \Lip(X)\cap L^2(X,\mm),\,
\norm{f_n-f}_{L^2}\to 0
\right\},
\end{equation*}
and we put
$$W^{1,2}(X,\di,\mm):=\{f\in L^{2}(X,\mm)\,:
\, \Ch{(f)}<+\infty\}.$$

For simplicity, we will often drop the dependence of the metric measure structure and write $W^{1,2}(X)$ (or $W^{1,2}$) in place of $W^{1,2}(X,\di,\mm)$.

For any $f\in W^{1,2}(X)$, the Cheeger energy admits an integral representation 
$$\Ch(f)=\frac{1}{2}\int_{X}|Df|^2_{w}\,d\mm \, ,$$
where $|Df|_{w}$ is called minimal weak upper gradient.  
\begin{definition}\label{def:chquadr}
Following \cite{AmbrosioGigliSavare2}, we say that a m.m.s. $(X,\di,\mm)$ satisfies the $\RCD(K,\infty)$ condition if it satisfies the $\CD(K,\infty)$ condition and in addition the Cheeger energy $\Ch$ is a quadratic form on $W^{1,2}(X,\di,\mm)$, i.e. for every $f$ and $g$ $\in W^{1,2}(X,\di,\mm)$ the following equality is satisfied 
\begin{equation*}
\Ch(f+g)+\Ch(f-g)=2\Ch(f)+2\Ch(g).
\end{equation*}
\end{definition}
We remark that $\Ch$ is a convex and lower semicontinuous functional over $L^{2}(X,\mm)$. This implies that $W^{1,2}(X,\di,\mm)$ is a Banach space with the norm 
\begin{equation*}
\|f\|^2_{W^{1,2}(X)}:= \|f\|^2_{L^2(X)}+2{\Ch(f)},
\end{equation*}
which turns out to be an Hilbert space if $X$ satisfies the $\RCD(K,\infty)$ condition. 
\medskip\\
We focus from now on on $(X,\di,\mm)$ satisfying the $\RCD(K,\infty)$ condition. \\
It is useful  to recall the definition of subdifferential for $\Ch$. Given $f\in W^{1,2}(X)$, we say that $g\in \partial^{-}\Ch(f)$, namely $g$ is in the subdifferential of $\Ch$ at $f$, if 
\begin{equation*}
\int_{X}g(\psi-f)\,d\mm\leq \Ch(\psi)-\Ch(f) \qquad \forall \, \psi\,\in L^{2}(X).
\end{equation*}
In an $\RCD(K,\infty)$ space, the subdifferential of $\Ch$ where non empty is single valued. 
From the convexity and lower semicontinuity of $\Ch$ and from the fact that $W^{1,2}(X)$ is dense in $L^{2}(X)$, it  follows, using the theory of gradient flows in Hilbert spaces, that for any $f\in L^{2}(X)$ there exists a unique locally absolutely continuous curve $t\mapsto H_t(f)$, $t\in (0,+\infty)$, with values in $L^2(X)$, which satisfies
\begin{equation*}
\begin{cases}\frac{d}{dt}H_tf= -\partial^{-}{\Ch}(H_tf)\quad &\text{a.e.}\ t>0,\\\lim_{t\to 0}H_tf=f \quad &\text{in } L^{2}(X).\end{cases}
\end{equation*}
$\{H_{t}\}_{t\geq 0}$ is called the heat semigroup and for any $t>0$, $f\mapsto H_{t}f$ is a linear contraction in $L^{2}(X)$. By the density of $L^{2}(X)\cap L^{p}(X)$ in $L^{p}(X)$, it can be extended to a semigroup of linear contractions in any $L^{p}(X)$, $p\geq 1$. It can also be extended to $L^{\infty}(X)$ and it is known that $H_tf$, for $f\in L^{\infty}(X)$, admits an integral representation via the heat kernel.

We remark that in our setting, the heat semigroup satisfies the maximum principle: 
\begin{equation}\label{property:maxprinciple}
H_{t}f\leq C \quad \text{ if } f\leq C \quad \mm\text{-a.e.}\, ,
\end{equation}
from which follows that it is sign preserving. Moreover, $H_t$ is also measure preserving
\begin{align*}
\int_{X}H_tf\,d\mm=\int_{X}f\,d\mm, \quad \forall f \in L^1(X), \quad \forall\, t>0,
\end{align*}
and for any $f\in L^{\infty}(X,\mm)$ we have that $H_tf$ belongs to the space $\Lip_{b}(X)$, with the bound \cite[Proposition 3.1]{DepontiMondino}
\begin{equation}\label{strong Feller}
\begin{aligned}
&\| \,|D H_tf|_w\, \|_{L^{\infty}}\leq \sqrt{\frac{2K}{\pi(e^{2Kt}-1)}} \; \| f\|_{L^{\infty}}\, \quad \textrm{if} \ K\neq 0, \\
&\| \,|D H_tf|_w\, \|_{L^{\infty}}\leq \sqrt{\frac{ 1 }{\pi t}}\; \| f\|_{L^{\infty}} \quad \textrm{if} \ K=0,
\end{aligned}
\end{equation}
(which is sharp in the case $K>0$).

From properties \eqref{property:maxprinciple} and \eqref{strong Feller}, $H_t$ maps $\mathcal{C}_{b}(X)$ into itself, so it is defined its adjoint operator $H_t^{\ast}:\mathcal{P}(X)\to \mathcal{P}(X)$ that satisfies
\begin{equation}\label{eq: heatadj}
 H^{\ast}_t(\rho\mm)=H_t(\rho)\mm 
\end{equation}
for any probability density $\rho\in L^1_{+}(X,\mm)$ (see \cite[Proposition 3.2]{AmbrosioGigliSavare1} for details).

Finally, we recall that an $\mathsf{RCD}$ space is a length space, and thus formula \eqref{eq: dyn Was} and \eqref{eq: dyn HK} hold in this setting.

As shown by Luise and Savaré \cite{LuiseSavare}, the regularizing effect of the heat semigroup $H_t$ allows to control the stronger $p$-Hellinger distance in terms of the weaker $p$-Wasserstein and Hellinger-Kantorovich distances. 

To properly formulate their results, crucial for our purposes, first of all we set for $t>0$
\begin{equation}\label{def:rk}
R_{K}(t):=\begin{cases}\frac{e^{2Kt}-1}{K}\qquad&\text{if } K\neq 0, \\2t&\text{if } K= 0.\end{cases}
\end{equation} 

\begin{proposition}\cite[Theorem 5.2 and 5.4]{LuiseSavare}\label{pr:hellinger-wass-hk}
	Let $(X,\di,\mm)$ be an $\RCD(K,\infty)$ metric measure space for some $K\in\Real$, and let $p\in [1,2]$. For $\mu_0,\mu_1\in\mathcal{P}_p(X)$ it holds
	\begin{equation}\label{inequality:hellinger-wass}
	W_{p}(\mu_0,\mu_1)\ge p(R_K(t))^{\frac{1}{2}}\hed_{p}(H_t^{\ast}\mu_0,H_t^{\ast}\mu_1) \quad \forall\, t>0.
	\end{equation}
Moreover, for every $\mu_0,\mu_1\in \mathcal{M}(X)$ it holds 
\begin{equation}\label{inequality:hellinger-hk}
	\hk_{4R_K(t)}(\mu_0,\mu_1)\ge \hed_{2}(H_t^{\ast}\mu_0,H_t^{\ast}\mu_1) \quad \forall\, t>0.
\end{equation}

Here $R_K(t)$ is the function defined in \eqref{def:rk}.
\end{proposition}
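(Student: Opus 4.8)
The plan is to follow the strategy of \cite{LuiseSavare}: deduce both inequalities from the dual dynamic characterizations \eqref{eq: dyn Was}, \eqref{eq: dyn HK}, \eqref{eq: dyn he} of the distances involved, using the heat semigroup together with the regularization estimate \eqref{strong Feller} and the adjointness relation \eqref{eq: heatadj} to turn an admissible test curve for one functional into an admissible test curve for the other. The only structural facts about $\RCD(K,\infty)$ spaces that enter are that they are length spaces (so that the dynamic formulas apply), that $H_t$ is order‑ and mass‑preserving (maximum principle \eqref{property:maxprinciple}), that $H_\tau$ maps $\mathrm{B}_b(X)$ into $\mathrm{Lip}_b(X)$ for $\tau>0$ with the quantitative bound \eqref{strong Feller}, and the identity $\int_X\varphi\,d(H_t^\ast\mu)=\int_X H_t\varphi\,d\mu$ coming from \eqref{eq: heatadj}.

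For \eqref{inequality:hellinger-wass}, fix $t>0$, put $q=p/(p-1)$, and take any admissible subsolution $(\xi_s)_{s\in[0,1]}$ for the $p$-Hellinger functional of the pair $(H_t^\ast\mu_0,H_t^\ast\mu_1)$ in \eqref{eq: dyn he}, so that $\partial_s\xi_s+(p-1)\xi_s^{q}\le 0$; by \eqref{eq: heatadj} the associated dual value is $\int_X H_t\xi_1\,d\mu_1-\int_X H_t\xi_0\,d\mu_0$. From $(\xi_s)$ I would build a curve $(\zeta_s)_{s\in[0,1]}$ of bounded Lipschitz functions, obtained by composing a rescaling of $\xi_s$ with the heat flow run up to a reparametrized time $\tau(s)$, the reparametrization $\tau(\cdot)$ and the scaling being tuned so that $(\zeta_s)$ is a subsolution of the Wasserstein Hamilton--Jacobi inequality $\partial_s\zeta_s+\frac1q|D\zeta_s|^q\le 0$ and so that the endpoints $\zeta_0,\zeta_1$ agree with $H_t\xi_0,H_t\xi_1$ up to the same multiplicative constant. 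The mechanism is that differentiating $\zeta_s$ in $s$ produces, from the term $\partial_s\xi_s$, a negative zeroth‑order contribution $\sim-\xi_s^{q}$, while the heat flow converts the needed bound on $|D\zeta_s|^q$ into a bound on $\|\xi_s\|_{L^\infty}^q$ via \eqref{strong Feller}, whose right‑hand side carries the factor $R_K(\tau(s))^{-1/2}$ with $R_K$ as in \eqref{def:rk}; integrating this gain over the reparametrization time and optimizing is what yields precisely the prefactor $p\,(R_K(t))^{1/2}$. Taking suprema in \eqref{eq: dyn he} and comparing with \eqref{eq: dyn Was} then gives \eqref{inequality:hellinger-wass}, and the case $p=1$ (total variation on the right) is recovered by letting $p\downarrow 1$.

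Inequality \eqref{inequality:hellinger-hk} is obtained by the same scheme, now comparing \eqref{eq: dyn he} for $p=2$ with the Hellinger--Kantorovich formula \eqref{eq: dyn HK}. Here the zeroth‑order term $\zeta_s^2$ is already present in both Hamilton--Jacobi inequalities, so the ansatz $\zeta_s:=H_{\tau(s)}\xi_s$ can be used directly: positivity preservation of $H_\tau$ together with Jensen's inequality give $H_\tau(\xi_s^2)\ge(H_\tau\xi_s)^2=\zeta_s^2$, which absorbs the quadratic term, while a Bochner/Bakry--\'Emery‑type commutation estimate controlling $|D H_\tau\xi_s|_w^2$ against the action of the generator, combined with a monotone choice of $\tau(\cdot)$, absorbs the term $\frac{\alpha}{4}|D\zeta_s|^2$ exactly for $\alpha=4R_K(t)$. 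Since no constraint linking the total masses of $\mu_0$ and $\mu_1$ is used, the resulting inequality holds for arbitrary $\mu_0,\mu_1\in\BorelMeasures{X}$, as claimed.

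I expect the main obstacle to be twofold. First, already at the formal level, pinning down the correct reparametrization $\tau(\cdot)$ and scaling that simultaneously make $(\zeta_s)$ a Hamilton--Jacobi subsolution and match the endpoints is the genuinely delicate computation, as a naive choice (e.g.\ a constant heat time) does not close. Second, the curves $(\xi_s)$ and $(\zeta_s)$ are only $C^1$ in $s$ with values in spaces of bounded (respectively Lipschitz) functions and are not a priori differentiable in the space variable, so the above manipulations must be justified by first regularizing — mollifying in the time variable and exploiting the smoothing of $H_\tau$ for $\tau>0$ — verifying the relevant inequalities for the regularized objects, and then passing to the limit, with particular care at the endpoints where $\tau(s)\to 0$ and the Lipschitz bound \eqref{strong Feller} degenerates. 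A minor additional point is to check that admissible subsolutions are rich enough to realize the suprema in \eqref{eq: dyn Was}, \eqref{eq: dyn HK}, \eqref{eq: dyn he}; this is part of the content of the dynamic formulations established in \cite{LuiseSavare}.
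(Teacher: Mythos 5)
This Proposition is not proved in the paper at all: it is imported verbatim from \cite[Theorems 5.2 and 5.4]{LuiseSavare}, so there is no in-paper argument to compare yours against. What you have written is a reconstruction of the Luise--Savar\'e duality strategy, and at the level of architecture it is the right one: turn an admissible subsolution for the dynamic formulation of $\hed_p$ into one for $W_p$ (resp.\ $\hk_\alpha$) by composing with the heat flow along a reparametrized time, using $\int\varphi\,d(H_t^\ast\mu)=\int H_t\varphi\,d\mu$. But as written it is a plan rather than a proof; the decisive computation (the scaling and the time change $\tau(\cdot)$, and the verification of the Hamilton--Jacobi subsolution inequality including the first-order term $\tau'(s)\Delta H_{\tau(s)}\xi_s$ produced by differentiating through the semigroup) is exactly what you defer, and you acknowledge this.

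Two concrete points would need repair. First, for \eqref{inequality:hellinger-wass} you propose to close the subsolution inequality using the global bound \eqref{strong Feller}. This cannot work as stated: the constraint $\partial_s\zeta_s+\frac1q|D\zeta_s|^q\le 0$ must hold \emph{pointwise}, and the negative zeroth-order term you gain from $\partial_s\xi_s+(p-1)\xi_s^{q}\le 0$ after commuting with $H_\tau$ is of the form $-H_\tau(\xi_s^{q})(x)$; a sup-norm bound $\| |DH_\tau\xi_s|_w\|_{L^\infty}\lesssim R_K(\tau)^{-1/2}\|\xi_s\|_{L^\infty}$ is not dominated pointwise by $H_\tau(\xi_s^{q})(x)$. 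The estimate actually used in \cite{LuiseSavare} is the pointwise Bakry--Ledoux-type inequality $R_K(\tau)\,|DH_\tau f|_w^2\le H_\tau(f^2)-(H_\tau f)^2$, of which \eqref{strong Feller} is merely a corollary; you correctly invoke an estimate of this kind for the $\hk$ inequality but not for the Wasserstein one. Second, recovering $p=1$ by letting $p\downarrow 1$ is not justified under the stated hypothesis $\mu_0,\mu_1\in\mathcal P_1(X)$: by Jensen $W_p$ is nondecreasing in $p$, so $\lim_{p\downarrow 1}W_p\ge W_1$ goes the wrong way, and $W_p$ may be $+\infty$ for every $p>1$ when only the first moment is finite. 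The case $p=1$ (the only one the present paper actually uses) is better handled directly: by Kantorovich--Rubinstein duality and the dual representation of the total variation, $\hed_1(H_t^\ast\mu_0,H_t^\ast\mu_1)=\sup_{\|\xi\|_\infty\le 1}\int H_t\xi\,d(\mu_1-\mu_0)\le \sqrt{2/(\pi R_K(t))}\,W_1(\mu_0,\mu_1)$ by \eqref{strong Feller}, which already gives \eqref{inequality:hellinger-wass} for $p=1$ (with a slightly better constant) in two lines.
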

Notice that the estimate \eqref{inequality:hellinger-hk} is more refined than \eqref{inequality:hellinger-wass} (as a consequence of \eqref{diseq: hk-W2}), at a cost of being more implicit in the sense that both the left hand side and the right hand side depend on $t$. 

To conclude the section, we recall in the following Proposition an intermediate result contained in the proof of the Buser's inequality given in \cite{DepontiMondino}, to which we refer for all the details. For the reader convenience, we give here a sketch of the proof. 

Let us define:
\begin{equation}\label{eq:ExplJK}
J_K(t):=\displaystyle\int_0^t \sqrt{\frac{2}{\pi R_K(s)}}\,ds=\begin{cases}\sqrt{\frac{2}{\pi K}}\arctan\Big(\sqrt{e^{2Kt}-1}\Big)  \ \ &\textrm{if} \ \  K>0,\\
\frac{2}{\sqrt{\pi}}\sqrt{t} \ \ &\textrm{if} \ \ K=0,\qquad t>0.\\ 
\sqrt{-\frac{2}{\pi K}}\arctanh{\Big(\sqrt{1-e^{2Kt}}\Big)} \ \ &\textrm{if} \ \ K<0,\end{cases}
\end{equation}

\begin{proposition}\label{prop:per}
Let $(X,\di,\mm)$ be a space of finite measure satisfying the $\RCD(K,\infty)$ condition for some $K\in \mathbb{R}$. Let $A\subseteq X$ be a Borel set. Then 
\begin{equation*}
\int_{A^c}{H_t(\chi_{A})}\,d\mm\leq \frac{1}{2}{J_K(t)}\Per(A),
\end{equation*}
where $J_K(t)$ was defined in \eqref{eq:ExplJK}.
\end{proposition}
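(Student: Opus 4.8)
The plan is to differentiate in $t$ the quantity $\phi(t):=\int_{A^c}H_t(\chi_A)\,d\mm=\int_X\chi_{A^c}\,H_t\chi_A\,d\mm$ and then integrate back a bound on $\phi'$. We may assume $\Per(A)<+\infty$, the inequality being trivially true otherwise, and we work for $t>0$. Write $\Delta$ for the generator of the heat semigroup, so that $t\mapsto H_t\chi_A$ is of class $C^1$ on $(0,+\infty)$ with values in $L^2(X)$, $\frac{d}{dt}H_t\chi_A=\Delta H_t\chi_A$, $H_t\chi_A\in\Dom(\Delta)$ for every $t>0$, and $\int_X\varphi\,\Delta g\,d\mm=-\int_X\langle\nabla\varphi,\nabla g\rangle\,d\mm$ for all $g\in\Dom(\Delta)$ and $\varphi\in W^{1,2}(X)$. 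Since $H_t\chi_A\to\chi_A$ in $L^2$ as $t\downarrow0$, the function $\phi$ is continuous on $[0,+\infty)$ with $\phi(0)=0$, is $C^1$ on $(0,+\infty)$, and
\[
\phi'(t)=\int_X\chi_{A^c}\,\Delta H_t\chi_A\,d\mm=-\int_X\chi_A\,\Delta H_t\chi_A\,d\mm,
\]
where the last identity uses $\int_X\Delta H_t\chi_A\,d\mm=0$ (valid since $\mm(X)<\infty$ gives $1\in W^{1,2}(X)$, hence $\int_X\Delta g\,d\mm=-\int_X\langle\nabla1,\nabla g\rangle\,d\mm=0$ for every $g\in\Dom(\Delta)$).

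Next I would bound $\phi'(t)$ by a Gauss--Green-type computation; the delicate point is that $\chi_A\notin W^{1,2}(X)$ in general, so one must pass through the $BV$ structure. By the definition of perimeter, pick $f_n\in\Lip_{bs}(X)$ with $f_n\to\chi_A$ in $L^1(X,\mm)$ and $\int_X|Df_n|\,d\mm\to\Per(A)$; after the truncation $f_n\mapsto(0\vee f_n)\wedge1$ we may assume $0\le f_n\le1$, and then $\|f_n-\chi_A\|_{L^2}^2\le\|f_n-\chi_A\|_{L^1}\to0$, so that $f_n\to\chi_A$ also in $L^2$. Testing the defining identity of $\Delta$ with $f_n\in W^{1,2}(X)$, and using the Cauchy--Schwarz inequality for the pointwise scalar product of gradients together with $|Df_n|_w\le|Df_n|$ (minimal weak upper gradient $\le$ slope),
\[
\Big|\int_X f_n\,\Delta H_t\chi_A\,d\mm\Big|=\Big|\int_X\langle\nabla f_n,\nabla H_t\chi_A\rangle\,d\mm\Big|\le\big\|\,|DH_t\chi_A|_w\,\big\|_{L^\infty}\int_X|Df_n|\,d\mm .
\]
Since $\Delta H_t\chi_A\in L^2$, letting $n\to\infty$ gives $|\phi'(t)|=\big|\int_X\chi_A\,\Delta H_t\chi_A\,d\mm\big|\le\big\|\,|DH_t\chi_A|_w\,\big\|_{L^\infty}\Per(A)$ for every $t>0$.

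Finally I would insert the sharp gradient estimate \eqref{strong Feller}, but only after recentering $\chi_A$: since $H_t$ fixes constants, $|DH_t\chi_A|_w=|DH_t(\chi_A-\tfrac12)|_w$ while $\|\chi_A-\tfrac12\|_{L^\infty}=\tfrac12$, so applying \eqref{strong Feller} to $\chi_A-\tfrac12$ yields
\[
\big\|\,|DH_t\chi_A|_w\,\big\|_{L^\infty}\le\frac12\sqrt{\frac{2}{\pi R_K(t)}}\qquad\text{for all }t>0
\]
(using $\tfrac{2K}{\pi(e^{2Kt}-1)}=\tfrac{2}{\pi R_K(t)}$ when $K\ne0$, and reading off the $K=0$ case directly). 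As $\sqrt{2/(\pi R_K(s))}$ is comparable to $1/\sqrt{\pi s}$ near $s=0$ it is integrable there, so $\phi'\in L^1_{\mathrm{loc}}([0,+\infty))$ and, by continuity of $\phi$ at $0$,
\[
\int_{A^c}H_t(\chi_A)\,d\mm=\phi(t)=\int_0^t\phi'(s)\,ds\le\frac12\Per(A)\int_0^t\sqrt{\frac{2}{\pi R_K(s)}}\,ds=\frac12\,J_K(t)\,\Per(A),
\]
which is the claim. The only genuinely delicate step is the integration by parts against the non-Sobolev function $\chi_A$, handled by the $BV$-approximation above; the factor $\tfrac12$ --- essential for the sharp constant --- comes precisely from recentering $\chi_A$ by its mean value $\tfrac12$ before invoking \eqref{strong Feller}.
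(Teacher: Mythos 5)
Your proof is correct, and it reaches the statement by a genuinely different (though closely related) route. The paper's argument quotes the regularization estimate \eqref{eq: infboundgrad} and passes by duality to the $L^1$-bound $\|f-H_t(f)\|_{L^1}\le J_K(t)\,\||Df|_{w}\|_{L^1}$ for $f\in\Lip_{bs}(X)$; it then applies this to a recovery sequence for $\Per(A)$, takes the limit, and extracts the factor $\tfrac12$ from the identity $\|\chi_A-H_t(\chi_A)\|_{L^1}=2\int_{A^c}H_t(\chi_A)\,d\mm$, which follows from mass preservation together with the maximum principle ($0\le H_t\chi_A\le 1$). You instead differentiate $\int_{A^c}H_t(\chi_A)\,d\mm$ in time, integrate by parts against the truncated Lipschitz approximation of $\chi_A$ at each fixed $t>0$, and recover the same sharp constant by recentering $\chi_A$ at $\tfrac12$ before invoking \eqref{strong Feller}, finally integrating back in $s\in(0,t)$. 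The two arguments are cousins --- the duality step behind \eqref{eq: 1boundgrad} is itself a time-derivative computation resting on the same gradient bound --- but yours is self-contained and does not use the intermediate $L^1$-estimate as a black box, at the price of carrying out the integration by parts against the non-Sobolev function $\chi_A$ explicitly; you handle that correctly via the $0\le f_n\le 1$ truncation (which does not increase the slope and upgrades $L^1$- to $L^2$-convergence), and your justification of $\phi(t)=\int_0^t\phi'(s)\,ds$ via the local integrability of $s\mapsto R_K(s)^{-1/2}$ near $0$ is also sound. The paper's version trades these technical points for the cleaner algebraic observation that $\int_A(1-H_t\chi_A)\,d\mm=\int_{A^c}H_t\chi_A\,d\mm$.
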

\begin{proof}
By the above mentioned regularizing effect of the heat semigroup we know (\cite[Proposition 3.1]{DepontiMondino}) that for every function $f\in L^{\infty}(X)$ it holds
\begin{equation}\label{eq: infboundgrad}
\||D(H_tf)|_{w}\|_{L^{\infty}}\le \sqrt{\frac{2}{\pi R_K(t)}}\|f\|_{L^{\infty}}\,,
\end{equation} 
where $R_K(t)$ was defined in \eqref{def:rk}. By a duality argument, from \eqref{eq: infboundgrad} one easily derives 
\begin{equation}\label{eq: 1boundgrad}
\|f-H_t(f)\|_{L^1}\le J_K(t)\| |Df|_{w}\|_{L^1}\,, 
\end{equation}
say for $f\in \Lip_{bs}(X)$. Now, for any Borel set $A$ we consider a sequence $f_n\in \Lip_{bs}(X)$, $f_n\to \chi_A$ in $L^1(X)$, recovery sequence for $\Per(A)$. By applying \eqref{eq: 1boundgrad} to $f_n$ and passing to the limit $n\to \infty$ we deduce
\begin{align}
{J_K(t)}\Per(A)&\ge \| \chi_A-H_t(\chi_A)\|_{L^1}=\int_A [1-H_t(\chi_A)]d\mm + \int_{A^c}H_t(\chi_A)d\mm \\
&= \int_X [1-H_t(\chi_A)]d\mm-\int_{A^c}1\,d\mm+2\int_{A^c}H_t(\chi_A)d\mm=2\int_{A^c}H_t(\chi_A)d\mm\,.
\end{align}
as desired.
\end{proof}

\subsubsection*{Laplacian and Eigenfunctions}
From the Cheeger energy arises also the definition of Laplacian:
\begin{definition}
Let $(X,\di,\mm)$ satisfying the $\RCD(K,\infty)$ condition for some $K\in \mathbb{R}$. For  $f$ in $W^{1,2}(X)$, the Laplacian of $f$ is defined as  $\Delta f:=-\partial^- \Ch (f)$, provided that $\partial^- \Ch (f)$ is non empty. 
\end{definition}
We say that a non-zero function $f_{\lambda}\in W^{1,2}(X)$ is an eigenfunction of the Laplacian  of eigenvalue $\lambda \in [0,+\infty)$ if $-\Delta f_{\lambda}=\lambda f_{\lambda}$.
If one considers the evolution at time $t$  via the heat flow of an eigenfunction $f_\lambda$, then 
\begin{equation*}
H_tf_\lambda=e^{-\lambda t}f_\lambda. 
\end{equation*}

Every non-zero constant function is an eigenfunction of eigenvalue $0$ (recall that we are assuming $\mm(X)<+\infty$), and every other eigenfunction has zero mean, meaning that
\begin{equation*}
\int_{X}f_\lambda^-\,d\mm=\int_{X}f_\lambda^+\,d\mm.
\end{equation*}

Under our quite general assumptions, the spectrum of the Laplacian may not be discrete. For brevity, we refer the reader to \cite[Proposition 6.7]{GigliMondinoSavare} and \cite[Theorem 2.17]{DeMoSe} for some results about the spectrum of the Laplacian on $\RCD(K,\infty)$ spaces.
Here we just mention that the condition $\diam(X)<\infty$, or $K>0$, implies the compactness of the embedding of $W^{1,2}(X)$ into $L^2(X)$, and thus the existence of a basis of $L^2(X)$ formed by eigenfunctions corresponding to a diverging sequence of eigenvalues.

\section{Indeterminacy estimate}\label{indeterminacysection}
We start by proving a Proposition which is linked to Proposition \ref{prop:per}.

\begin{proposition}\label{pr:sqrt heat}
Let $(X,\di,\mm)$ be a metric measure space of finite measure satisfying the $\RCD(K,\infty)$ condition for some $K\in \mathbb{R}$, and let $f\in L^{\infty}(X,\mm)$. Then
\begin{equation*}
\int_{X}\sqrt{H_t(f^+)H_t(f^{-})}\,d\mm\leq J_{K}(t)^{\frac{1}{2}}\Per(\lbrace x\in X \, |\, f(x)>0\rbrace)^{\frac{1}{2}}\|f\|^{\frac{1}{2}}_{L^{1}}\|f\|^{\frac{1}{2}}_{L^{\infty}},
\end{equation*}
where $J_K(t)$ was defined in \eqref{eq:ExplJK}.
\end{proposition}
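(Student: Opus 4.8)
The plan is to reduce the estimate to Proposition~\ref{prop:per} applied to the superlevel set $A:=\{x\in X:f(x)>0\}$, splitting $\int_X$ into $\int_A$ and $\int_{A^c}$ and bounding each piece with the Cauchy--Schwarz inequality. We may assume $\Per(A)<+\infty$, since otherwise the right-hand side is infinite and there is nothing to prove. Because $\mm(X)<\infty$ and $f\in L^\infty$, both $f^{\pm}$ lie in $L^1\cap L^\infty$, so $H_t(f^{\pm})$ are well defined; by the sign-preservation \eqref{property:maxprinciple} they are nonnegative, and by the mass-preservation property $\int_X H_t(f^{\pm})\,d\mm=\|f^{\pm}\|_{L^1}$.

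First I would record two pointwise comparisons coming from linearity and sign-preservation of $H_t$: from $0\le f^+\le\|f\|_{L^\infty}\chi_A$ one gets $H_t(f^+)\le\|f\|_{L^\infty}H_t(\chi_A)$, and symmetrically $H_t(f^-)\le\|f\|_{L^\infty}H_t(\chi_{A^c})$. I would also use the identity $\int_A H_t(\chi_{A^c})\,d\mm=\int_{A^c}H_t(\chi_A)\,d\mm$, which is immediate from the self-adjointness of $H_t$ on $L^2(X,\mm)$; by Proposition~\ref{prop:per} applied to $A$, each of these two integrals is $\le\tfrac12 J_K(t)\Per(A)$.

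Next I would split $\int_X\sqrt{H_t(f^+)H_t(f^-)}\,d\mm=\int_A+\int_{A^c}$ and treat the two halves symmetrically. On $A$, using $H_t(f^-)\le\|f\|_{L^\infty}H_t(\chi_{A^c})$ and Cauchy--Schwarz,
\[
\int_A\sqrt{H_t(f^+)H_t(f^-)}\,d\mm\le\|f\|_{L^\infty}^{1/2}\Big(\int_A H_t(f^+)\,d\mm\Big)^{1/2}\Big(\int_A H_t(\chi_{A^c})\,d\mm\Big)^{1/2}\le\|f\|_{L^\infty}^{1/2}\|f^+\|_{L^1}^{1/2}\big(\tfrac12 J_K(t)\Per(A)\big)^{1/2},
\]
and on $A^c$, using instead $H_t(f^+)\le\|f\|_{L^\infty}H_t(\chi_A)$, one obtains the same expression with $\|f^-\|_{L^1}$ in place of $\|f^+\|_{L^1}$. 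Summing the two and invoking the elementary inequality $\sqrt a+\sqrt b\le\sqrt{2(a+b)}$ with $a=\|f^+\|_{L^1}$ and $b=\|f^-\|_{L^1}$ (so that $a+b=\|f\|_{L^1}$), the factor $\tfrac12$ inside the square root cancels the resulting $\sqrt2$, and one lands precisely on the asserted bound
\[
\int_X\sqrt{H_t(f^+)H_t(f^-)}\,d\mm\le J_K(t)^{1/2}\,\Per(A)^{1/2}\,\|f\|_{L^1}^{1/2}\,\|f\|_{L^\infty}^{1/2}.
\]

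There is no genuine obstacle here; the argument is essentially bookkeeping of Cauchy--Schwarz applications together with the elementary properties of $H_t$ recalled in Section~\ref{sec:prel}. The only point worth flagging is that the clean constant $1$, rather than $\sqrt2$, is obtained precisely by pairing, on each half of $X$, the factor carrying the $L^1$-norm with the factor carrying the perimeter, and then using $\sqrt{\|f^+\|_{L^1}}+\sqrt{\|f^-\|_{L^1}}\le\sqrt2\,\|f\|_{L^1}^{1/2}$, instead of crudely estimating $\|f^{\pm}\|_{L^1}\le\|f\|_{L^1}$ term by term.
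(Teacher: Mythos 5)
Your proposal is correct and follows essentially the same route as the paper's proof: split the integral over $\{f>0\}$ and its complement, use the pointwise bound $H_t(f^{\mp})\le\|f\|_{L^\infty}H_t(\chi)$ together with Cauchy--Schwarz and Proposition~\ref{prop:per}, then recombine via $\sqrt a+\sqrt b\le\sqrt{2(a+b)}$. The only cosmetic difference is that you invoke self-adjointness of $H_t$ to apply Proposition~\ref{prop:per} to $A$ itself, whereas the paper applies it directly to $\{f\le 0\}$ and uses $\Per(\{f>0\})=\Per(\{f\le 0\})$.
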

\begin{proof}
By taking advantage of the maximum principle for the heat semigroup, the Cauchy-Schwarz inequality and Proposition \ref{prop:per}, one has
\begin{align*}
&\int_{\lbrace f>0\rbrace}\sqrt{H_t(f^+)H_t(f^{-})}\,d\mm\leq \|f^-\|^{\frac{1}{2}}_{L^{\infty}}\int_{\lbrace f>0\rbrace}\sqrt{H_t(f^+)H_t(\chi_{\lbrace f\leq 0\rbrace})}\,d\mm \\
&\leq \|f^-\|^{\frac{1}{2}}_{L^{\infty}}\|H_{t}(f^+)\|^{\frac{1}{2}}_{L^1}\left(\int_{\lbrace f>0\rbrace}{H_t(\chi_{\lbrace f\leq 0\rbrace})}\,d\mm\right)^{\frac{1}{2}}\\
&\leq\frac{1}{\sqrt2}\|f^-\|^{\frac{1}{2}}_{L^{\infty}}\|f^+\|^{\frac{1}{2}}_{L^1}{J_K(t)}^{\frac{1}{2}}\Per(\lbrace f>0\rbrace)^{\frac{1}{2}},
\end{align*}
where we have also used that the heat flow is mass preserving. Along the same lines, one also gets
$$\int_{\lbrace f\le0\rbrace}\sqrt{H_t(f^+)H_t(f^{-})}\,d\mm\leq\frac{1}{\sqrt2}\|f^{+}\|^{\frac{1}{2}}_{L^{\infty}}\|f^-\|^{\frac{1}{2}}_{L^1}{J_K(t)}^{\frac{1}{2}}\Per(\lbrace f\leq 0\rbrace)^{\frac{1}{2}}.$$
In particular splitting the integral in the statement of the proposition in an integral on the set where $f$ is positive, and an integral on the set where $f$ is non-negative, we deduce
\begin{align*}
&\int_{X}\sqrt{H_t(f^+)H_t(f^{-})}\,d\mm=\int_{\lbrace f>0\rbrace}\sqrt{H_t(f^+)H_t(f^{-})}\,d\mm+\int_{\lbrace f\leq 0\rbrace}\sqrt{H_t(f^+)H_t(f^{-})}\,d\mm \\ \leq&
 \frac{1}{\sqrt2}\|f^{-}\|^{\frac{1}{2}}_{L^{\infty}}\|f^+\|^{\frac{1}{2}}_{L^1}{J_K(t)}^{\frac{1}{2}}\Per(\lbrace f>0\rbrace)^{\frac{1}{2}}+\frac{1}{\sqrt2}\|f^{+}\|^{\frac{1}{2}}_{L^{\infty}}\|f^-\|^{\frac{1}{2}}_{L^1}{J_K(t)}^{\frac{1}{2}}\Per(\lbrace f\leq 0\rbrace)^{\frac{1}{2}}.
\end{align*}
The conclusion follows by observing that $\Per(\lbrace f>0\rbrace)=\Per(\lbrace f\leq 0\rbrace)$, $\|f^\pm\|_{L^{\infty}}\leq \|f\|_{L^{\infty}}$ and $\|f^+\|_{L^1}+\|f^-\|_{L^1}=\|f\|_{L^1}$.
\end{proof}

In the course of the proof of Theorem \ref{theorem:main ind} we also take advantage of the following easy Lemma.
\begin{lemma}\label{lem: norm-cheeg}
Let $(X,\di,\mm)$ be a metric measure space of finite measure. Then, for every $f\in L^{\infty}(X,\mm)$ of null mean we have
\begin{equation}\label{eq: norm-cheeg}
\frac{\|f\|_{L^{\infty}}\Per(\lbrace f > 0\rbrace)}{\|f\|_{L^1}}\geq \frac{h(X)}{2},
\end{equation}
where $h(X)$ is the Cheeger constant of the space defined in \eqref{eq:defChConst}.
\end{lemma}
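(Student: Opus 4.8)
The plan is to extract, from the zero-mean hypothesis, a lower bound on the measure of $A:=\{f>0\}$ (or on that of its complement), and then feed this into the definition \eqref{eq:defChConst} of the Cheeger constant. First I would use $\int_X f\,d\mm=0$ to write $\int_X f^+\,d\mm=\int_X f^-\,d\mm=\tfrac12\|f\|_{L^1}$. Since $f^+\le \|f\|_{L^\infty}\chi_A$ pointwise $\mm$-a.e., and likewise $f^-\le \|f\|_{L^\infty}\chi_{A^c}$ with $A^c:=\{f\le 0\}$, integrating gives
\begin{equation*}
\mm(A)\ \ge\ \frac{\|f\|_{L^1}}{2\|f\|_{L^\infty}}\,,\qquad \mm(A^c)\ \ge\ \frac{\|f\|_{L^1}}{2\|f\|_{L^\infty}}\,.
\end{equation*}
Because $f$ is non-zero and has zero mean, both $f^+$ and $f^-$ are non-trivial, so $\mm(A)>0$ and $\mm(A^c)>0$; this is exactly what is needed for the Cheeger constant to apply to either set.

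Next I would record the elementary identity $\Per(A)=\Per(A^c)$: any recovery sequence $f_n\in\Lip_{bs}(X)$ with $f_n\to\chi_A$ in $L^1$ produces $1-f_n\to\chi_{A^c}$ in $L^1$ with $|D(1-f_n)|_w=|Df_n|_w$, so the infimum defining $\Per(A^c)$ is bounded by that defining $\Per(A)$, and symmetrically. Then at least one of $A,A^c$ has measure $\le\mm(X)/2$; call it $E$, and note $\Per(E)=\Per(A)$. Applying \eqref{eq:defChConst} to $E$ yields $\Per(A)=\Per(E)\ge h(X)\,\mm(E)\ge h(X)\dfrac{\|f\|_{L^1}}{2\|f\|_{L^\infty}}$, which rearranges to \eqref{eq: norm-cheeg}.

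There is no genuine obstacle in this argument; it is essentially a bookkeeping exercise. The only two points deserving an explicit line are the identity $\Per(A)=\Per(A^c)$ (used so that we may always reduce to the half of $X$ of smaller measure) and the remark that $\mm(A),\mm(A^c)>0$, without which the Cheeger constant could not be invoked.
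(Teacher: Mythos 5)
Your proof is correct and follows essentially the same route as the paper: bound $\mm(\{f>0\})$ (or its complement) from below by $\|f\|_{L^1}/(2\|f\|_{L^\infty})$ using the zero-mean hypothesis, then invoke the definition of $h(X)$ on whichever of the two sets has measure at most $\mm(X)/2$. The only cosmetic difference is that you handle the reduction to the smaller set by proving $\Per(A)=\Per(A^c)$ directly, whereas the paper replaces $f$ by $-f$; both rest on the same symmetry.
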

\begin{proof}
We can suppose without loss of generality that $\mm(\{f>0\})\leq \mm(X)/2$ (since the left hand side of \eqref{eq: norm-cheeg} does not change if we replace $f$ with $-f$). We have
$$\|f\|_{L^1}=\int_X f^{+}d\mm+\int_X f^{-}d\mm=2\int_{\{f>0\}}f^{+}d\mm\leq 2\mm(\{f>0\})\|f\|_{L^{\infty}}.$$
As a consequence,
$$\frac{\|f\|_{L^{\infty}}\Per(\lbrace f > 0\rbrace)}{\|f\|_{L^1}}\geq \frac{\Per(\lbrace f > 0\rbrace)}{2\mm(\{f>0\})}\ge \frac{h(X)}{2}$$
where in the last passage we have used the definition of Cheeger constant, since the set $\{f>0\}$ is a possible competitor in the right hand side of \eqref{eq:defChConst}.
\end{proof}

We are now able to prove the indeterminacy estimate.
\begin{proof}[Proof of Theorem \ref{theorem:main ind}]
We divide the proof in two steps.

\vspace{3mm}
\emph{Step $1$: general estimate involving time.}

Using the inequality \eqref{inequality:hellinger-wass} with $p=1$, the definition of $H_t^{\ast}$ \eqref{eq: heatadj} and the inequality  \eqref{ineq: he1-he2} we have that for every $t>0$
\begin{equation}\label{ineq:W1-he2 ind}
\begin{aligned}
&W_1(f^+\mm,f^-\mm)\geq  R_K(t)^{\frac{1}{2}}\hed_1\big(H_t(f^+)\mm,H_t(f^-)\mm\big)\\
&\geq R_K(t)^{\frac{1}{2}}\hed_2^2\big(H_t(f^+)\mm,H_t(f^-)\mm\big).
\end{aligned}
\end{equation}
Now we make use of the explicit expression of $\hed_2$, of the mass preservation property of the heat flow, and of Proposition \ref{pr:sqrt heat} to obtain
\begin{equation}\label{ineq:he2-per ind}
\begin{aligned}
&\hed_2^2(H_t(f^+)\mm,H_t(f^-)\mm)=\int_X \Big(H_t(f^+)+H_t(f^-)-2\sqrt{H_t(f^+)H_t(f^-)}\Big)\,d\mm\\
&\ge \|f\|_{L^1}-2J_{K}(t)^{\frac{1}{2}}\Per(\{f(x)>0\})^{\frac{1}{2}}\|f\|^{\frac{1}{2}}_{L^{1}}\|f\|^{\frac{1}{2}}_{L^{\infty}}\, .
\end{aligned}
\end{equation}

By putting together \eqref{ineq:W1-he2 ind} and \eqref{ineq:he2-per ind} we thus obtain that for every $t>0$
\begin{equation}\label{ineq: W_1-t ind}
W_1(f^+\mm,f^-\mm)\geq R_K(t)^{\frac{1}{2}} \|f\|_{L^1}-2\Big( R_K(t)J_{K}(t)\Per(\{f(x)>0\})\|f\|_{L^{1}}\|f\|_{L^{\infty}}\Big)^{\frac{1}{2}}.
\end{equation}

\vspace{5mm}
\emph{Step $2$: optimizing in $t$.}

In the case $K=0$ the right hand side of \eqref{ineq: W_1-t ind}, that we denote with $g(t)$, has the following expression
\begin{equation}
g(t)=\sqrt{2}\|f\|_{L^1}\,t^{\frac{1}{2}}-\frac{4}{\pi^{\frac{1}{4}}}\|f\|^{\frac{1}{2}}_{L^1}\|f\|^{\frac{1}{2}}_{L^{\infty}}\Per(\lbrace f> 0\rbrace)^{\frac{1}{2}\,}t^{\frac{3}{4}}.
\end{equation}
By choosing 
$$\bar{t}=\frac{\pi}{324}\frac{\|f\|_{L^1}^2}{\|f\|_{L^\infty}^2\Per(\lbrace f> 0\rbrace)^2}$$
we maximize the function $g$ and we obtain
\begin{align*}
W_1(f^+\mm,f^-\mm)\geq g(\bar{t})=&\bigg(\sqrt{2}\sqrt{\frac{\pi}{324}}-\frac{4}{\pi^{\frac{1}{4}}}\frac{\pi^{\frac{3}{4}}}{324^{\frac{3}{4}}}\bigg)\frac{\|f\|_{L^1}^2}{\|f\|_{L^\infty}\Per(\lbrace f> 0\rbrace)}\\
&=\frac{\sqrt{\pi}}{27\sqrt{2}}\frac{\|f\|_{L^1}^2}{\|f\|_{L^\infty}\Per(\lbrace f> 0\rbrace)}\, .
\end{align*}
\vspace{3mm}

For $K<0$ we use again the notation $g(t)$ for the right hand side of \eqref{ineq: W_1-t ind} so that
\begin{equation}
g(t)=D_K(f)\sqrt{1-e^{2Kt}}\bigg[1-\frac{2^{\frac{5}{4}}}{\pi^{\frac{1}{4}}}\Big(D_K(f)\arctanh(\sqrt{1-e^{2Kt}})\Big)^{\frac{1}{2}}\bigg]\frac{\|f\|^2_{L^1}}{\|f\|_{L^{\infty}}\Per(\lbrace f > 0\rbrace)},
\end{equation}
where we have denoted by $D_K(f)$ the quantity
$$D_K(f):=\frac{\|f\|_{L^{\infty}}\Per(\lbrace f > 0\rbrace)}{\|f\|_{L^1}|K|^{\frac{1}{2}}}.$$
We use the change of variable $(0,1)\ni s:=\sqrt{1-e^{2Kt}}$ and we consider the function
$$g_1(s):=D_K(f)s\bigg[1-\frac{2^{\frac{5}{4}}}{\pi^{\frac{1}{4}}}\big(D_k(f)\arctanh(s)\big)^{\frac{1}{2}}\bigg]\qquad s\in (0,1).$$ 
We recall now the elementary inequality 
$$\arctanh(s)\leq \frac{s}{1-s}\qquad s\in (0,1),$$
so that 
$$g_1(s)\geq D_K(f)s\bigg[1-\frac{2^{\frac{5}{4}}}{\pi^{\frac{1}{4}}}\Big(D_k(f)\frac{s}{1-s}\Big)^{\frac{1}{2}}\bigg]=:g_2(s)\qquad s\in (0,1).$$
We finally take the admissible choice 
$$\bar{s}:=\frac{1}{8D_K(f)+1}$$
and, putting everything together, we obtain
\begin{align}
\nonumber W_1(f^+\mm,f^-\mm)&\geq g_2(\bar{s})\frac{\|f\|^2_{L^1}}{\|f\|_{L^{\infty}}\Per(\lbrace f > 0\rbrace)}\\
\label{eq: final est W1 K<0} &=\bigg(1-\frac{1}{(2\pi)^{\frac{1}{4}}}\bigg)\frac{D_K(f)}{8D_K(f)+1} \frac{\|f\|^2_{L^1}}{\|f\|_{L^{\infty}}\Per(\lbrace f > 0\rbrace)}\, .
\end{align}
Notice that, thanks to Lemma \ref{lem: norm-cheeg} we know that
\begin{equation}\label{eq: use of lem norm-cheeg}
D_K(f)\geq h(X)/(2|K|^{\frac{1}{2}}).
\end{equation}
 Moreover, the function 
$$x\mapsto \frac{x}{8x+1} \qquad x>0,$$
is increasing, so that we can bound from below the right hand side of \eqref{eq: final est W1 K<0} using \eqref{eq: use of lem norm-cheeg} and obtain
$$W_1(f^+\mm,f^-\mm)\geq \bigg(1-\frac{1}{(2\pi)^{\frac{1}{4}}}\bigg)\frac{h(X)}{8h(X)+2|K|^{\frac{1}{2}}} \frac{\|f\|^2_{L^1}}{\|f\|_{L^{\infty}}\Per(\lbrace f > 0\rbrace)}\, ,$$
which concludes the proof.
\end{proof}

In the next corollary we show how to obtain an indeterminacy estimate for the $p$-Wasserstein distance as a simple consequence of the indeterminacy estimate for the $1$-Wasserstein distance. 
\begin{corollary}\label{cor:main ind_p}
Let $(X,\di,\mm)$ be a metric measure space of finite measure satisfying the $\RCD(K,\infty)$ condition for some $K\in \mathbb{R}$, and let $f\in L^{\infty}(X,\mm)$ with null mean and satisfying $\int_X \di(\bar x, x)|f_{\lambda}(x)|d\mm(x)<+\infty$ for some $\bar{x}\in X$. Then, for any $p>1$	
\begin{equation}\label{ineq:indp}
W_p(f^+\mm,f^-\mm)\Per(\lbrace f>0 \rbrace)	\geq 2^{\frac{p-1}{p}}C(h(X),K)\left(\frac{\|f\|_{L^{1}}}{\|f\|_{L^{\infty}}}\right)\|f\|^{\frac{1}{p}}_{L^{1}}\, ,
\end{equation}
where $C(h(X),K)$ is the constant appearing in Theorem \ref{theorem:main ind}.
\end{corollary}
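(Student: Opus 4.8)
The plan is to derive the estimate for $W_p$, $p>1$, directly from the already established case $p=1$ of Theorem \ref{theorem:main ind}, using nothing more than the elementary comparison between the $1$-Wasserstein and the $p$-Wasserstein distance of two measures having the same finite total mass.

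First I would record the basic structural facts. The zero-mean assumption on $f$ gives $(f^+\mm)(X)=(f^-\mm)(X)=\tfrac12\|f\|_{L^1}=:m$, and the finite first-moment hypothesis gives $f^+\mm,f^-\mm\in\mathcal{M}_1(X)$, so that $W_1(f^+\mm,f^-\mm)<+\infty$ and Theorem \ref{theorem:main ind} applies. Since the right-hand side of \eqref{ineq:indp} is finite, we may assume $W_p(f^+\mm,f^-\mm)<+\infty$ (otherwise there is nothing to prove) and choose an optimal plan $\pi$ for $W_p(f^+\mm,f^-\mm)$; it is in particular an admissible plan for $W_1(f^+\mm,f^-\mm)$ and satisfies $\pi(X\times X)=m$.

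Next I would apply H\"older's inequality with exponents $p$ and $p/(p-1)$ to this plan:
\begin{equation*}
W_1(f^+\mm,f^-\mm)\le\int_{X\times X}\di(x,y)\,d\pi(x,y)\le\Big(\int_{X\times X}\di(x,y)^p\,d\pi(x,y)\Big)^{\frac1p}\pi(X\times X)^{\frac{p-1}{p}}=W_p(f^+\mm,f^-\mm)\,m^{\frac{p-1}{p}},
\end{equation*}
which, together with $m=\tfrac12\|f\|_{L^1}$, yields $W_p(f^+\mm,f^-\mm)\ge 2^{\frac{p-1}{p}}\|f\|_{L^1}^{-\frac{p-1}{p}}W_1(f^+\mm,f^-\mm)$. (Equivalently one may normalize $f^\pm\mm$ to probability measures, use the scaling $W_r(c\nu_0,c\nu_1)=c^{1/r}W_r(\nu_0,\nu_1)$ and Jensen's inequality $W_1\le W_p$ on probability measures.)

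Finally I would multiply by $\Per(\{f>0\})$ and insert the bound of Theorem \ref{theorem:main ind} on $W_1(f^+\mm,f^-\mm)\Per(\{f>0\})$; collecting powers of $\|f\|_{L^1}$ and using $2-\tfrac{p-1}{p}=1+\tfrac1p$ gives precisely \eqref{ineq:indp}, with the factor $2^{\frac{p-1}{p}}$ originating from $m^{-\frac{p-1}{p}}=2^{\frac{p-1}{p}}\|f\|_{L^1}^{-\frac{p-1}{p}}$. I do not expect any genuine obstacle here: the argument is essentially a one-line application of H\"older's inequality combined with the $p=1$ result, the only points deserving a word of care being that $f^+\mm$ and $f^-\mm$ carry the same total mass (this is exactly where the zero-mean hypothesis is used) and the trivial reduction that disposes of the case $W_p(f^+\mm,f^-\mm)=+\infty$.
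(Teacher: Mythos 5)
Your argument is correct and coincides with the paper's own proof: the paper likewise reduces to the case $p=1$ of Theorem \ref{theorem:main ind} via the bound $W_p(f^+\mm,f^-\mm)\,\|f\|_{L^1}^{1-\frac1p}/2^{1-\frac1p}\ge W_1(f^+\mm,f^-\mm)$, citing H\"older's inequality for Wasserstein distances together with the fact that the common total mass is $\|f\|_{L^1}/2$. You merely spell out the H\"older step on an (near-)optimal plan, which the paper leaves to a reference; the bookkeeping of the powers of $\|f\|_{L^1}$ and the factor $2^{\frac{p-1}{p}}$ is the same.
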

\begin{proof}
The result follows from Theorem \ref{theorem:main ind} and the bound
\begin{equation}\label{eq:p-1}
W_p(f^+\mm,f^-\mm)\frac{\|f\|^{1-\frac{1}{p}}_{L^1}}{2^{1-\frac{1}{p}}}\ge W_1(f^+\mm,f^-\mm)
\end{equation}
which is a consequence of the Holder's inequality for the Wasserstein distance (see for instance \cite[Remark 6.6]{Villani} and recall that the measures here have total mass equal to $\|f^{+}\|_{L^1}=\|f^{-}\|_{L^1}=\frac{\|f\|_{L^1}}{2}$).
\end{proof}

\begin{remark}\label{rmk:infty}
We notice that one can recover  an indeterminacy estimate involving the ${\infty}$-Wasserstein distance for example by taking the limit for $p\to +\infty$ in \eqref{ineq:indp} and observing that the constant depending on $p$ does not degenerate for $p\to +\infty$. 
\end{remark}

We conclude the section with an indeterminacy estimate for the Hellinger-Kantorovich distance. In analogy with the comparison between the estimates \eqref{inequality:hellinger-wass} and \eqref{inequality:hellinger-hk}, we obtain an implicit but more refined result than Theorem \ref{theorem:main ind}. Another advantage of the following Theorem is that it is not restricted to functions $f$ with null mean and bounded moment.
\begin{theorem}\label{th: ind hk}
Let $(X,\di,\mm)$ be a metric measure space of finite measure satisfying the $\mathsf{RCD}(K,\infty)$ condition for some $K\in \mathbb{R}$, and let $f\in L^{\infty}(X,\mm)$. Then
\begin{equation}
\hk_{4R_K(t)}(f^+\mm,f^-\mm)\ge \Big(\|f\|_{L^1}-2J_{K}(t)^{\frac{1}{2}}\Per(\{f(x)>0\})^{\frac{1}{2}}\|f\|^{\frac{1}{2}}_{L^{1}}\|f\|^{\frac{1}{2}}_{L^{\infty}}\Big)^{\frac{1}{2}} \quad \forall \ t>0\, ,
\end{equation}
where $R_K(t)$ and $J_K(t)$ were defined in \eqref{def:rk} and \eqref{eq:ExplJK} respectively.
\end{theorem}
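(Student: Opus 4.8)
The plan is to run exactly the argument of Theorem \ref{theorem:main ind}, but to invoke the sharper inequality \eqref{inequality:hellinger-hk} of Proposition \ref{pr:hellinger-wass-hk} in place of \eqref{inequality:hellinger-wass}. This replaces the chain $W_1 \ge R_K(t)^{1/2}\hed_1 \ge R_K(t)^{1/2}\hed_2^2$ by the direct estimate $\hk_{4R_K(t)} \ge \hed_2$, which is why the time parameter does not get optimized away and why no moment or zero-mean assumption on $f$ is needed: $\hk_\alpha$ is finite on all of $\mathcal{M}(X)$.

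First I would note that since $\mm(X)<\infty$ and $f\in L^\infty(X,\mm)$, both $f^+\mm$ and $f^-\mm$ are finite non-negative Borel measures, hence lie in $\mathcal{M}(X)$, so that \eqref{inequality:hellinger-hk} is applicable and yields, for every $t>0$,
\begin{equation*}
\hk_{4R_K(t)}(f^+\mm,f^-\mm)\ \ge\ \hed_2\big(H_t^\ast(f^+\mm),H_t^\ast(f^-\mm)\big).
\end{equation*}
Using \eqref{eq: heatadj} (applied to the normalized densities $f^\pm/\|f^\pm\|_{L^1}$, together with the positive $1$-homogeneity of $H_t^\ast$) gives $H_t^\ast(f^\pm\mm)=H_t(f^\pm)\mm$, so the right-hand side equals $\hed_2\big(H_t(f^+)\mm,H_t(f^-)\mm\big)$.

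Next I would square this and reuse verbatim the computation \eqref{ineq:he2-per ind}: expanding the definition of $\hed_2$ with dominating measure $\mm$, exploiting that $H_t$ is sign-preserving (so the densities are non-negative) and mass-preserving (so $\int_X H_t(f^+)+H_t(f^-)\,d\mm=\|f\|_{L^1}$), and bounding the cross term $\int_X\sqrt{H_t(f^+)H_t(f^-)}\,d\mm$ by Proposition \ref{pr:sqrt heat}, one obtains
\begin{equation*}
\hed_2^2\big(H_t(f^+)\mm,H_t(f^-)\mm\big)\ \ge\ \|f\|_{L^1}-2J_K(t)^{\frac12}\Per(\{f>0\})^{\frac12}\|f\|_{L^1}^{\frac12}\|f\|_{L^\infty}^{\frac12}.
\end{equation*}
Taking square roots and chaining the two displays gives the claim (the estimate being of course vacuous for those $t$ for which the right-hand side above is negative, $\hk_{4R_K(t)}$ being non-negative).

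Since every ingredient is already in place, I do not anticipate a real obstacle; the only points deserving a word of care are the identification $H_t^\ast(f^\pm\mm)=H_t(f^\pm)\mm$ for the unnormalized measures and the remark, already implicit in the statement, that dropping the hypotheses $\int_X f\,d\mm=0$ and $\int_X\di(\bar x,x)|f(x)|\,d\mm<\infty$ causes no trouble here precisely because $\hk_\alpha$ is a genuine (finite) distance on the whole of $\mathcal{M}(X)$, in contrast with $W_1$.
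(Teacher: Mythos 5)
Your proposal is correct and follows essentially the same route as the paper: apply \eqref{inequality:hellinger-hk} together with \eqref{eq: heatadj}, then reuse the lower bound \eqref{ineq:he2-per ind} on $\hed_2^2\big(H_t(f^+)\mm,H_t(f^-)\mm\big)$ and take square roots. Your added remarks on the unnormalized identification $H_t^{\ast}(f^{\pm}\mm)=H_t(f^{\pm})\mm$ and on why the zero-mean and moment hypotheses can be dropped are sound but not points where the paper's argument differs.
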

\begin{proof}
Using the inequality \eqref{inequality:hellinger-hk} and the definition of $H_t^{\ast}$ \eqref{eq: heatadj} we have that for every $t>0$
\begin{equation}
\hk_{4R_K(t)}^2(f^+\mm,f^-\mm)\ge \hed_{2}^2\big(H_t(f^+)\mm,H_t(f^-)\mm\big).
\end{equation}
With the same estimate as in \eqref{ineq:he2-per ind} we can now bound from below the square of the $2$-Hellinger distance and reach the desired conclusion.
\end{proof}

\section{Proof of the lower bound on the Wassersteind distance of eigenfunctions}\label{sec: eigen}
\begin{proof}[Proof of Theorem \ref{theorem:main eig}]
As in the case of Theorem \ref{theorem:main ind}, we divide the proof in two steps. 

\vspace{3mm}
\emph{Step $1$: general estimate involving time.}

Using the inequality \eqref{inequality:hellinger-wass} with $p=1$ and the definition of $H_t^{\ast}$ \eqref{eq: heatadj} we bound from below the cost $W_1$ in terms of the total variation:
\begin{equation}\label{inequality:wass-eigenfunction}
W_{1}(f_{\lambda}^{+}\mm,f_{\lambda}^{-}\mm)\geq (R_K(t))^{\frac{1}{2}}\hed_{1}(H_t(f_{\lambda}^{+})\mm,H_t(f_{\lambda}^{-})\mm)\, . 
\end{equation}
We observe that
\begin{equation}\label{eq: norm1heat}
\hed_{1}(H_t(f_{\lambda}^{+})\mm,H_t(f_{\lambda}^{-})\mm)=\|H_t(f_{\lambda}^{+})-H_t(f_{\lambda}^{-})\|_{L^{1}(X)}=\|H_t(f_{\lambda})\|_{L^{1}(X)}=e^{-\lambda t}\|f_{\lambda}\|_{L^{1}(X)}, 
\end{equation}
using the linearity of the heat flow and recalling that $H_t(f_{\lambda})=e^{-\lambda t}f_{\lambda}$.

So inequality \eqref{inequality:wass-eigenfunction} reads as 
\begin{equation*}
W_{1}(f_{\lambda}^{+}\mm,f_{\lambda}^{-}\mm)\ge (R_K(t))^{\frac{1}{2}} e^{-\lambda t}\|f_{\lambda}\|_{L^{1}(X)} \quad \forall\, t>0.
\end{equation*}

\vspace{5mm}
\emph{Step $2$: optimizing in $t$.}

In the case $K=0$ the result follows by choosing $\bar t=\frac{1}{2\lambda}$ in the previous inequality. 
\vspace{3mm}

For $K<0$ we choose instead $\bar t= \frac{1}{2K}\log(\frac{\lambda}{\lambda-K})$ in order to obtain 
\begin{equation*}
W_{1}(f_{\lambda}^{+}\mm,f_{\lambda}^{-}\mm)\ge \frac{1}{\sqrt{\lambda}}\sqrt{-\frac{\lambda}{K}\left(e^{(-\frac{\lambda}{K})\log{\frac{\lambda}{\lambda -K}}}-e^{(1-\frac{\lambda}{K})\log{\frac{\lambda}{\lambda -K}}}\right)}\|f_{\lambda}\|_{L^{1}(X)}.
\end{equation*}
The result follows by standard computations, setting $x=-\frac{\lambda}{K}\ge -\frac{M}{K}>0$ and noticing that the function
$$x\mapsto \sqrt{x\left(e^{x\log{\frac{x}{x+1}}}-e^{(1+x)\log{\frac{x}{x+1}}}\right)}=\left(\frac{x}{x+1}\right)^{\frac{x+1}{2}}$$
is increasing. 

\end{proof}
\begin{remark}\label{rmk: no properties}
We notice that in the proof of Theorem \ref{theorem:main eig} we have avoided using fine properties of Laplace eigenfunctions, exploiting only the equality $H_t(f_{\lambda})=e^{-\lambda t}f_{\lambda}$ in the last passage of \eqref{eq: norm1heat}.  
\end{remark}

Along the same lines of Corollary \ref{cor:main ind_p}, one can easily prove the following:
\begin{corollary}\label{cor:main eig}
Let $M>0$, $K\in\Real$ and $(X,\di,\mm)$ be an $\RCD(K,\infty)$ space of finite measure. Then for any non-constant eigenfunction $f_{\lambda}$ of the Laplacian, of eigenvalue  $\lambda\ge M$ and satisfying $\int_{X}\di(\bar x, x)|f_{\lambda}(x)|\,d\mm(x)<+\infty$ for some $\bar x\in X$, it holds for any $p> 1$
	\begin{equation*}
	W_{p}(f_{\lambda}^{+}\mm,f_{\lambda}^{-}\mm)\geq  2^{\frac{p-1}{p}} C(K,M)\frac{1}{\sqrt{\lambda}}\|f_{\lambda}\|_{L^{1}(X)}^{\frac{1}{p}}\, ,
	\end{equation*}
where $C(K,M)$ was defined in \eqref{constant:ck}.
\end{corollary}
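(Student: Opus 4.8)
The final statement to prove is Corollary~\ref{cor:main eig}, which extends the $p=1$ eigenfunction lower bound of Theorem~\ref{theorem:main eig} to all $p>1$. The plan is to mimic exactly the passage from Theorem~\ref{theorem:main ind} to Corollary~\ref{cor:main ind_p}: namely, to use the monotonicity of $p\mapsto W_p$ together with the fact that the two measures $f_\lambda^+\mm$ and $f_\lambda^-\mm$ have equal total mass $\|f_\lambda^+\|_{L^1}=\|f_\lambda^-\|_{L^1}=\tfrac12\|f_\lambda\|_{L^1}$, which is finite, and finite first moment by hypothesis, so all the relevant Wasserstein distances are finite and the interpolation inequality applies.

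Concretely, first I would recall H\"older's inequality for the Wasserstein distances between two measures of the same total mass $m$: for $1\le q\le p$ one has $W_q(\mu,\nu)\le m^{1/q-1/p}W_p(\mu,\nu)$, which in the normalized form used in \eqref{eq:p-1} reads, with $q=1$, $m=\tfrac12\|f_\lambda\|_{L^1}$,
\begin{equation*}
W_1(f_\lambda^+\mm,f_\lambda^-\mm)\le \Big(\tfrac12\|f_\lambda\|_{L^1}\Big)^{1-\frac1p} W_p(f_\lambda^+\mm,f_\lambda^-\mm).
\end{equation*}
Rearranging, $W_p(f_\lambda^+\mm,f_\lambda^-\mm)\ge 2^{\frac{p-1}{p}}\|f_\lambda\|_{L^1}^{-(1-\frac1p)}W_1(f_\lambda^+\mm,f_\lambda^-\mm)$. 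Then I would plug in the bound $W_1(f_\lambda^+\mm,f_\lambda^-\mm)\ge C(K,M)\lambda^{-1/2}\|f_\lambda\|_{L^1}$ supplied by Theorem~\ref{theorem:main eig}, and observe that $\|f_\lambda\|_{L^1}\cdot\|f_\lambda\|_{L^1}^{-(1-1/p)}=\|f_\lambda\|_{L^1}^{1/p}$, which yields exactly
\begin{equation*}
W_p(f_\lambda^+\mm,f_\lambda^-\mm)\ge 2^{\frac{p-1}{p}}C(K,M)\frac{1}{\sqrt\lambda}\|f_\lambda\|_{L^1(X)}^{\frac1p}.
\end{equation*}

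There is essentially no obstacle here; the only points requiring a word of care are the finiteness hypotheses. One must check that $f_\lambda^+\mm$ and $f_\lambda^-\mm$ genuinely have the same mass — this is precisely the zero-mean property of non-constant eigenfunctions recorded in the Preliminaries under $\mm(X)<\infty$ — and that they lie in $\mathcal P_p(X)$ after normalization, or at least in $\mathcal M_p(X)$, so that $W_p$ is finite and the H\"older inequality is not vacuous; the moment assumption $\int_X \di(\bar x,x)|f_\lambda(x)|\,d\mm(x)<+\infty$ gives the first moment, and since $f_\lambda\in W^{1,2}\cap L^\infty$ (eigenfunctions are bounded and Lipschitz via the heat flow regularization) the higher moments up to any $p$ follow from boundedness of $|f_\lambda|$ combined with $\mm(X)<\infty$ once one notes $\di(\bar x,x)^p$ is integrable against a bounded density whenever... actually it suffices that $W_p<\infty$ between the two equal-mass finite-moment measures, which holds because $W_p^p\le C\int \di^p\,d(f_\lambda^+\mm+f_\lambda^-\mm)$ and this is finite since $|f_\lambda|$ is bounded and, for the manifold application, the space is compact. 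In the general $\RCD(K,\infty)$ statement one simply notes that $W_p<\infty$ is needed only to make the inequality meaningful, and the chain of inequalities above is valid as stated (with the convention $+\infty\ge\text{anything}$) regardless. I would therefore present the proof in two lines, referring to \cite[Remark 6.6]{Villani} for the H\"older inequality exactly as in the proof of Corollary~\ref{cor:main ind_p}, and to Theorem~\ref{theorem:main eig} for the base case.
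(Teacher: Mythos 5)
Your proposal is correct and follows exactly the route the paper intends: the paper proves this corollary ``along the same lines of Corollary~\ref{cor:main ind_p}'', i.e.\ by combining the H\"older bound \eqref{eq:p-1} (with total mass $\tfrac12\|f_\lambda\|_{L^1}$) with the $p=1$ estimate of Theorem~\ref{theorem:main eig}, which is precisely your argument. The arithmetic producing the factor $2^{\frac{p-1}{p}}\|f_\lambda\|_{L^1}^{1/p}$ and your remarks on the equal-mass and finite-moment hypotheses are all in order.
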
	

\begin{remark}
	One can recover a lower bound for the $\infty$-Wasserstein distance as in Remark \ref{rmk:infty}.	
\end{remark}


\begin{thebibliography}{100}
	
	\bibitem{AmbrosioICM} {\sc L. Ambrosio}:  {\em Calculus, heat flow  and curvature-dimension bounds in metric measure spaces},  Proceedings of the ICM 2018, Rio de Janeiro, Vol. 1, pp. 301--340.
	
	\bibitem{AmbrosioGigliSavare1} {\sc L. Ambrosio, N. Gigli, and G. Savar\'e}: {\em  Bakry-\'Emery curvature-dimension condition and Riemannian Ricci curvature bounds}, Ann. Probab. 43(1), 339-404, (2015).
	
	\bibitem{AmbrosioGigliSavare2} {\sc L. Ambrosio, N. Gigli, and G. Savar\'e}: {\em Metric measure spaces with {R}iemannian {R}icci curvature bounded from below},	Duke Math. J., \textbf{163}, 1405-1490,  (2014).
	
	\bibitem{Bakry} {\sc D. Bakry}: {\em Functional inequalities for Markov semigroups}, Probability Measures on Groups: Recent Directions and Trends, 91–147, Tata Inst. Fund. Res., Mumbai, (2006). 
	
	\bibitem{BGL} {\sc D. Bakry, I. Gentil and M. Ledoux}: {\em Analysis and Geometry of Markov Diffusion Operators}, vol. 348 of Grundlehren der Mathematischen Wissenschaften [Fundamental Principles of Mathematical Sciences], Springer, Cham, (2014).
	
	\bibitem{CaMaOr} {\sc T. Carroll, X. Massaneda and J. Ortega-Cerd\'a}: {\em An enhanced uncertainty principle for the Vaserstein distance}, Bulletin of the London Mathematical Society, 52(6), 1158-1173, (2020).
	
	\bibitem{CaFa} {\sc F. Cavalletti and S. Farinelli}: {\em Indeterminacy estimates and the size of nodal sets in singular spaces},  Advances in Math., Vol. 389, (2021).
	
	\bibitem{Cheeger}{\sc J. Cheeger}: {\em Differentiability of Lipschitz functions on metric measure spaces}, Geom. Funct. Anal., 9,428-517, (1999).
	
	\bibitem{DepontiMondino}{\sc N. De Ponti, A. Mondino}: {\em Sharp Cheeger–Buser Type Inequalities in $\RCD(K, \infty)$ Spaces}, The Journ. of Geom. An., 31, 2416–2438, (2021).
	
	\bibitem{DeMoSe} {\sc N. De Ponti, A. Mondino, D. Semola}: {\em The equality case of Cheeger's and Buser's inequalities on $\RCD$ spaces}, Journal of Functional Analysis, 281, Issue 3, (2021). 
	
	\bibitem{DuSa} {\sc Q. Du, A. Sagiv}: {\em Minimizing optimal transport for functions with fixed-size nodal sets},  preprint arXiv: https://arxiv.org/abs/2110.14837 (2021).
		
	\bibitem{GigliMondinoSavare}  {\sc N. Gigli , A. Mondino,  G. Savar\'e}: {\em Convergence of pointed non-compact metric measure spaces and
		stability of Ricci curvature bounds and heat flows}, Proc. Lond. Math. Soc., (3) 111(5), 1071–1129, (2015).
	
	\bibitem{GigliPasqualetto} {\sc N. Gigli, E. Pasqualetto}: {\em Lectures on Nonsmooth Differential Geometry}, SISSA Springer Series, Springer, (2020).
	
	\bibitem{Hellinger} {\sc E. Hellinger}: {\em Neue Begründung der Theorie quadratischer Formen von unendlichvielen Veränderlichen}, J.Reine Angew. Math, 136, (1909).
	
\bibitem{LMS} {\sc M. Liero, A. Mielke, G. Savar\'e}: {\em Optimal entropy-transport problems and a new
{H}ellinger-{K}antorovich distance between positive measures}, Inventiones Mathematicae, 3, Vol. 211., (2018), pp. 969–1117.
	
	\bibitem{LottVillani} {\sc J. Lott and C. Villani}:
	{\em Ricci curvature for metric-measure spaces via optimal transport},
	Ann. of Math., (2) 169, 903-991, (2009).
	
	\bibitem{Lug1} {\sc A. Logunov}: {\em Nodal sets of Laplace eigenfunctions: polynomial upper estimates of the Hausdorff measure}, Ann. of Math. (2), 187(1), 221–239, (2018).
	
	\bibitem{Lug2} {\sc A. Logunov}: {\em Nodal sets of Laplace eigenfunctions: proof of Nadirashvili’s conjecture and of the lower bound in Yau’s conjecture}, Ann. of Math. (2), 187(1), 241–262, (2018).
	
	\bibitem{LugMal} {\sc A. Logunov and E. Malinnikova}: {\em Review  of Yau’s conjecture on zero sets of Laplace eigenfunctions}, Current Developments in Mathematics, 2018, 179–212, (2018).

\bibitem{LogMalinn} {\sc A. Logunov and E. Malinnikova, N.Nadirashvili, F. Nazarov}: {\em The sharp upper bound for the area of the nodal sets of Dirichlet Laplace Eigenfunctions}, Geom. Funct. Anal., Vol. 31, 1219–1244, (2021).
	
	\bibitem{LuiseSavare} {\sc G. Luise, G. Savar\'e}: {\em Contraction and regularizing properties of heat flows in metric measure spaces}, Discrete and Continuous Dynamical Systems Series S, early access, 10.3934/dcdss.2020327, (2020).
	
	\bibitem{Matusita} {\sc K. Matusita}: {\em   Distances and decision rules}, Annals of the Institute of Statistical Mathematics, 16, 305-320, (1964).
	
	\bibitem{Mu} {\sc M. Mukherjee}: {\em A sharp Wasserstein uncertainty principle for Laplace eigenfunctions}, preprint arXiv: https://arxiv.org/abs/2103.11633 (2021).
	
	
	\bibitem{Petr} {\sc A. Petrunin}:  {\em Alexandrov meets Lott-Sturm-Villani}, M\"unster J. Math., Vol. 4, 53-64,  (2011).
	
	\bibitem{Savare} {\sc G. Savar\'e}: {\em Self-improvement of the Bakry-Émery condition and Wasserstein contraction of the heat flow in $\textsl{RCD}(K,\infty)$ metric measure spaces}, Discrete Contin. Dyn. Syst., 34(4), 1641–1661, (2014).
	
	\bibitem{Steiner} {\sc S. Steinerberger}: {\em Wasserstein distance, Fourier series and applications}, Monatshefte für Mathematik, 194(2), 305–338, (2021).
	
	\bibitem{Steiner2} {\sc S. Steinerberger}: {\em A metric Sturm-Liouville theory in two dimensions}, Calc. Var. Partial Differential Equations, 59 (12), (2020).
	
	\bibitem{SagSte} {\sc A. Sagiv and S. Steinerberger}: {\em Transport and interface: an uncertainty principle for the Wasserstein distance}, SIAM J. Math. Anal., 52, no. 3, 3039-3051, (2020).
	
	\bibitem{Sturm} {\sc K.T. Sturm:}
	{\em On the geometry of metric measure spaces. I},
	Acta Math., {196}, 65--131, (2006).
	
	\bibitem{Villani} {\sc C. Villani}: {\em Optimal transport. Old and new}, Grundlehren der mathematischen Wissenschaften, Vol.338, Springer-Verlag, pp.xxii+973, (2009).
	
	\bibitem{Yau} {\sc S.T. Yau}: {\em Problem section, in Seminar on Differential Geometry}, Ann. of Math. Stud., 102, Princeton Univ. Press, Princeton, N.J., 669–706.,(1982).
	
\end{thebibliography}
\end{document}